\theoremstyle{plain}
\newtheorem{theorem}{Theorem}[section]
\newtheorem{definition}[theorem]{Definition}
\newtheorem{lemma}[theorem]{Lemma}
\newtheorem{assumption}[theorem]{Assumption}
\theoremstyle{definition}
\newtheorem{remark}[theorem]{Remark}
\numberwithin{equation}{section}
\newcommand{\R}{\mathbb{R}}
\def\mydot{\hspace{-2pt} \cdot \hspace{-2pt}}
\newcommand{\commentout}[1]{}
\title{Regularization Theory of the Analytic~Deep~Prior~Approach}
\author{Clemens Arndt\thanks{Center for Industrial Mathematics, University of Bremen, 
Germany (\texttt{carndt@uni-bremen.de})}
}
\date{March 2022}
\begin{document}
\maketitle

\begin{abstract}

The analytic deep prior (ADP) approach was recently introduced for the theoretical analysis of deep image prior (DIP) methods with special network architectures. In this paper, we prove that ADP is in fact equivalent to classical variational Ivanov methods for solving ill-posed inverse problems. Besides, we propose a new variant which incorporates the strategy of early stopping into the ADP model. For both variants, we show how classical regularization properties (existence, stability, convergence) can be obtained under common assumptions.
\end{abstract}

%
%

\section{Introduction}

In particular the field of image processing (e.g.\ denoising, deblurring) is a constant source for challenging inverse problems.
The restoration of a corrupted image is typically ill-posed, so regularization techniques are needed to obtain a natural looking result. In other words, the restoration method should incorporate some prior knowledge about the appearance of natural images. 
However, dependent on the application it can be very difficult to give a mathematically exact definition of what natural looking images are.
This makes it hard to encode such prior knowledge in a penalty term  for classical variational regularization approaches (e.g.\ TV regularization \cite{Rudin1992tv, chambolle2011}). 

However, deep learning methods with convolutional neural networks have proven to be quite successful in generating and restoring images \cite{jain2009, goodfellow2014gan, Tai2017}. One reason for that is the use of appropriate training data, but \cite{ulyanov2018dip} shows that just the architecture of an untrained network can already serve as an image prior. The so-called deep image prior (DIP) approach consists in optimizing the weights of a neural network $\varphi_\theta$ to minimize the loss function
\begin{equation} \label{eq_dip_loss}
    \frac{1}{2} \|A \varphi_\theta(z) - y^\delta\|^2
\end{equation}
for some forward operator $A$ and noisy data $y^\delta$ (the network's input $z$ is randomly chosen and kept fixed). Although no training data and no penalty functional is used, DIP produces remarkable results in different image processing tasks, as can be seen in \cite{ulyanov2018dip}. Even challenging problems like sparse angle computed tomography \cite{Baguer_2020} or compressive sensing \cite{heckel2020compressive} can be solved this way.

Developing regularization theory for deep learning methods is of high interest \cite{arridge_maass_2019}. A very prosporous approach is to combine classical theory with deep learning (e.g.\ \cite{Li_2020, romano2017}).
The number of papers which analyze DIP from a theoretical point of view is also growing.
In \cite{habring2021generative} a functional is constructed, which measures the ability of the neural network $\varphi_\theta$ to approximate an arbitrary image. This functional can then be used as a penalty term in a classical variational method. 
The authors of \cite{Shi_Mettes_2022} analyze how fast a DIP network approximates the low-frequency and high-frequency components of the target image. By controlling this so-called spectral bias, overfitting is avoided.
In \cite{heckel2020convolution} the ability to denoise images is attributed to convolutional layers, which are faster in fitting smooth images than noisy ones. The role and the choice of hyperparameters for DIP approaches is described in \cite{Sun_Zhao_2021}. A Bayesian perspective is presented in \cite{Cheng_2019_CVPR}, where DIP is interpreted as a Gaussian process.

The choice of architecture is crucial for applications of DIP. Generative neural networks are a natural choice due to their ability to reproduce natural looking images. But the authors of \cite{otero2020} took a LISTA-like network \cite{gregor2010} instead to develop the so-called analytic deep prior (ADP) approach. This may not lead to a better practical performance of DIP, but it's the foundation for an interesting theory. The main aspect consists in interpreting the training of a neural network as the optimization of a Tikhonov functional. There is an analogy to \cite{alberti2021learning}, where the penalty term for a Tikhonov functional is optimized. But in contrast to that, the focus of \cite{otero2020} is on the forward operator inside the functional (see section \ref{ch_prelim}).

This work summarizes deeper investigations of the ADP model. The main result (Theorem \ref{theo_equivalence}) is an equivalence between the ADP approach and classical Ivanov methods \cite{Vasin1970RelationshipOS}. Out of this follows a complete analysis of the regularization properties of ADP including the existence of solutions, stability of reconstructions and convergence towards the ground truth for vanishing noise. 

In practical applications of DIP, gradient descent and early stopping is used to minimize the loss function \eqref{eq_dip_loss}. Thus, a global (or at least a local) minimum is not reached in general. While this fact was not considered in the theoretical derivation of ADP, we propose a new variant (called ADP-$\beta$) which incorporates the effect of early stopping into the model (section \ref{ch_earlystopping}). We also analyze the regularization properties of this new approach.

In section \ref{ch_numeric} we compare different numerical ways to compute ADP and DIP (with a LISTA-like architecture) solutions of simple inverse problems.\footnote{Code available at \url{https://gitlab.informatik.uni-bremen.de/carndt/analytic_deep_prior}} We find that numerical solutions of both methods are mostly similar to each other, which is important for using the ADP theory for interpretations of DIP. But there can also be observed some interesting disparities between the different numerical ways.
This illustrates a crucial difference between the analytical definition of DIP as a minimization problem and the numerical implementation as a gradient descent iteration.

%
%

\section{Preliminaries and methods} \label{ch_prelim}

We consider an inverse problem based on the operator equation
\begin{equation} \label{eq_inverseproblem}
    A x^\dagger = y^\dagger
\end{equation}
where we want to recover the unknown ground truth $x^\dagger$ as good as possible. The data $y^\dagger$ is typically not known exactly, but we have only access to noisy data $y^\delta$. 

\begin{assumption} \label{def_assumptions}
We make the following assumptions for the inverse problem \eqref{eq_inverseproblem}.
\begin{itemize}
    \item Let $X, Y$ be Hilbert spaces and $A \in L(X,Y)$.
    \item There exists $x^\dagger \in X$ and for a given $\delta >0$, it holds $\|y^\delta - y^\dagger\| \leqslant \delta$ for $y^\delta \in Y$.
    \item Let $R \colon X \to [0, \infty]$ be a convex, coercive and weakly lower semicontinuous functional with $R \not\equiv \infty$.
\end{itemize}
\end{assumption}

We recall the definition of Bregman distances, which we will use in Theorem \ref{theo_convergencebeta} for a convergence result, similar to the ones in \cite{Burger_2004, Hofmann_2007}.

\begin{definition}[Bregman distance]
For a convex functional $R\colon X \to [0, \infty]$ with subdifferential $\partial R$ and $\tilde{x}, x \in X$, the Bregman distance is defined as the set
\begin{equation}
D_R(\tilde{x}, x) = \{R(\tilde{x}) - R(x) - \langle p, \tilde{x} - x \rangle \, | \, p \in \partial R(x)\}.
\end{equation} 
\end{definition}

The DIP approach (introduced by \cite{ulyanov2018dip}) for the inverse problem \eqref{eq_inverseproblem} consists in solving
\begin{equation} \label{eq_DIP}
    \min_{\theta} \frac{1}{2} \|A \varphi_\theta (z) - y^\delta\|^2
\end{equation}
via a gradient descent w.r.t.\ the parameters $\theta$ of a neural network $\varphi_\theta$, as already described in the introduction. Despite the use of a neural network, DIP is a model-based approach and not data-based. To derive the ADP approach, we have to make two assumptions (see \cite{otero2020} for details).

The first one is choosing $\varphi_\theta$ to be a LISTA-like network \cite{gregor2010}, which consists of several layers of the form
\begin{equation} \label{eq_lista}
    x^{l+1} = S_{\alpha \lambda}(x^l - \lambda B^* (B x^l - y^\delta)),
\end{equation}
where $B=\theta$ is the trainable parameter. Originally, this architecture is inspired by ISTA \cite{daubechies2004}, an algorithm for finding sparse solutions of the inverse problem \eqref{eq_inverseproblem}, and $S_{\alpha \lambda}$ is the shrinkage function. More general, we can choose $S_{\alpha \lambda}$ to be the proximal mapping of a penalty functional $R$. Then, \eqref{eq_lista} equals a proximal forward-backward splitting algorithm \cite[Theorem 3.4]{combettes2005} which converges to the solution of the minimization problem  
\begin{equation} \label{eq_minproblemB}
    \min_{x \in X} \frac{1}{2} \|B x - y^\delta\|^2 + \alpha R(x).
\end{equation}

The second assumption is letting the number of layers tend to infinity. This might be difficult in practice (see section \ref{ch_numeric}), but it causes the output $\varphi_\theta(z)$ of the network to be a solution of \eqref{eq_minproblemB}. Therefore the ADP model (introduced by \cite{otero2020}) is defined as
\begin{align} \label{eq_ADP}
\begin{split}
    &\min_{B \in L(X,Y)} \frac{1}{2} \|A x(B) - y^\delta\|^2 \\
    \text{s.t.} \quad x(B) = \, &\mathrm{arg} \min_{x \in X} \frac{1}{2} \|B x - y^\delta\|^2 + \alpha R(x).
\end{split}
\end{align}
While DIP is about optimizing the weights of a neural network, ADP is about optimizing the forward operator in a Tikhonov functional.
If we add an additional regularization term for the operator $B$, we get the (new) ADP-$\beta$ model
\begin{equation}
\begin{split} \label{eq_ADPbeta}
    \min_{B \in L(X,Y)} \frac{1}{2} \| A x(B) - y^\delta\|^2  + \beta \|B-A\|^2 \\
    \text{s.t.} \quad x(B) =  \mathrm{arg} \min_{x \in X} \, \frac{1}{2} \|Bx - y^\delta\|^2 + \alpha R(x).
\end{split}
\end{equation}
The reason for this modification will be explained in section \ref{ch_earlystopping}.

To guarantee uniqueness of $x(B)$, the functional $R$ should be strictly convex, but this is not always required. 
If we assume $R$ even to be strongly convex, $x(B)$ depends continuously on $B$ as the following theorem states. It will be useful for proving existence and stability results for ADP-$\beta$.

\begin{theorem} \label{theo_xBstetig}
Let $R \colon X \to [0, \infty]$ be a strongly convex, coercive and weakly lower semicontinuous functional. Then
\begin{equation}
    x(B) = \mathrm{arg} \min_{x \in X} \frac{1}{2} \|B x - y^\delta\|^2 + \alpha R(x)
\end{equation} 
depends continuously on $B \in L(X,Y)$.
\end{theorem}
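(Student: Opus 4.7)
The plan is to prove this by a standard strong-convexity comparison argument. First I set up the Tikhonov functional
\begin{equation*}
F_B(x) \;=\; \tfrac{1}{2}\|Bx - y^\delta\|^2 + \alpha R(x),
\end{equation*}
and observe that since the data-fidelity term is convex and $R$ is strongly convex with some modulus $\gamma>0$, the functional $F_B$ is itself strongly convex with modulus $\alpha\gamma$, \emph{uniformly in} $B$. Together with coercivity and weak lower semicontinuity of $F_B$, this ensures that $x(B)$ exists and is unique for every $B \in L(X,Y)$, and that the strong-convexity inequality
\begin{equation*}
F_B(x) \;\geqslant\; F_B(x(B)) + \tfrac{\alpha\gamma}{2}\|x - x(B)\|^2
\end{equation*}
holds for all $x \in X$.

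Next I take a sequence $B_n \to B$ in $L(X,Y)$ and aim to show $x(B_n) \to x(B)$ in $X$. The first step is uniform boundedness of $\{x(B_n)\}$: by minimality, $F_{B_n}(x(B_n)) \leqslant F_{B_n}(x_0)$ for any fixed $x_0$ with $R(x_0)<\infty$, and the right-hand side is bounded since $\{B_n\}$ is norm-bounded. This yields $\alpha R(x(B_n)) \leqslant C$, and coercivity of $R$ forces $\{x(B_n)\}$ to be bounded.

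The second step is a uniform comparison between $F_B$ and $F_{B_n}$ on bounded sets. Writing
\begin{equation*}
F_B(x) - F_{B_n}(x) \;=\; \tfrac{1}{2}\bigl\langle (B-B_n)x,\, (B+B_n)x - 2y^\delta\bigr\rangle,
\end{equation*}
one sees that $|F_B(x) - F_{B_n}(x)| \leqslant \|B-B_n\|\cdot \bigl(\tfrac{1}{2}(\|B\|+\|B_n\|)\|x\|^2 + \|x\|\|y^\delta\|\bigr)$, which tends to zero as $n\to\infty$, uniformly for $x$ in any bounded set. Applied to the bounded sequence $x(B_n)$ and to the fixed point $x(B)$, this gives
\begin{equation*}
F_B(x(B_n)) - F_{B_n}(x(B_n)) \;\to\; 0
\quad\text{and}\quad
F_{B_n}(x(B)) - F_B(x(B)) \;\to\; 0.
\end{equation*}

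The conclusion then follows by chaining inequalities. Strong convexity of $F_B$ at its minimizer gives
\begin{equation*}
\tfrac{\alpha\gamma}{2}\|x(B_n) - x(B)\|^2 \;\leqslant\; F_B(x(B_n)) - F_B(x(B)),
\end{equation*}
and splitting the right-hand side as
\begin{equation*}
\bigl[F_B(x(B_n)) - F_{B_n}(x(B_n))\bigr] + \bigl[F_{B_n}(x(B_n)) - F_{B_n}(x(B))\bigr] + \bigl[F_{B_n}(x(B)) - F_B(x(B))\bigr],
\end{equation*}
the middle bracket is $\leqslant 0$ by minimality of $x(B_n)$ for $F_{B_n}$, and the outer brackets vanish by the uniform comparison. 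Hence $\|x(B_n) - x(B)\| \to 0$. I expect the main obstacle, if any, to be purely technical: making sure the comparison bound between $F_B$ and $F_{B_n}$ is genuinely uniform on bounded sets, which is why I keep the explicit inner-product expansion above. Everything else reduces to standard properties of strongly convex functionals.
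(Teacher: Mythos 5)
Your proof is correct and follows essentially the same route as the paper: bound the minimizers $x(B_n)$ via the minimality comparison and coercivity, show that the perturbation $F_B - F_{B_n}$ vanishes uniformly on bounded sets so that $x(B_n)$ is a minimizing sequence for $F_B$, and conclude strong convergence from strong convexity. The only difference is that the paper cites an external result (a minimizing sequence of a strongly convex functional converges to its minimizer) for the final step, whereas your quadratic-growth inequality $F_B(x) \geqslant F_B(x(B)) + \tfrac{\alpha\gamma}{2}\|x - x(B)\|^2$ together with the three-term splitting makes that step self-contained.
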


The proof can be found in the appendix \ref{app_proof_xBstetig}.

%
%

\section{Theoretical results} \label{ch_theoreticalresults}

%
%

\subsection{Equivalence to classical methods} \label{ch_equivalence}
DIP solutions of inverse problems are naturally restricted to be the output of a neural network. Analogously, only elements of the set
\begin{equation}
    U_{\alpha R} = \left\lbrace \hat{x} \in X \, \bigg| \, \exists B \in L(X,Y) : \hat x = \mathrm{arg} \min_{x \in X} \, \frac{1}{2} \|Bx - y^\delta\|^2 + \alpha R(x) \right\rbrace
\end{equation}
can be solutions of the ADP approach. By definition
\begin{align}
&\min_{x \in U_{\alpha R}} \frac{1}{2} \| A x - y^\delta\|^2 
\end{align}
is equivalent to the original ADP problem \eqref{eq_ADP}. To get a better understanding of this minimization problem we investigate $U_{\alpha R}$. It will turn out that the set $U_{\alpha R}$ can be characterized in a much easier way, even without using an operator $B \in L(X,Y)$. For this purpose, we formulate the following lemmas. 

\begin{lemma} \label{lem_existsB_allg}
Let $R \colon X \to [0,\infty]$ be a convex, coercive and weakly lower semicontinuous functional and $\hat{x} \in X$, $y^\delta \in Y$, $y^\delta \neq 0$ and $\alpha>0$ be arbitrary. If there exists $v \in \partial R(\hat{x})$ such that
\begin{equation}
 \alpha \langle v,\hat{x}\rangle \leqslant \frac{\|y^\delta\|^2}{4}
\end{equation}
holds, then there exists a linear operator $B \in L(X,Y)$ which fulfills
\begin{equation}
\hat{x} = \mathrm{arg} \min_{x \in X} \frac{1}{2} \|B x - y^\delta\|^2 + \alpha R(x).
\end{equation}
\end{lemma}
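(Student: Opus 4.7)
The plan is to construct $B$ explicitly as a rank-one operator and read off the parameters from the hypothesis. First I would write down the first-order optimality condition for the strongly/convexly regularized problem: since the functional $x \mapsto \tfrac12 \|Bx - y^\delta\|^2 + \alpha R(x)$ is convex, $\hat{x}$ is a minimizer if and only if
\begin{equation*}
0 \in B^*(B\hat{x} - y^\delta) + \alpha\, \partial R(\hat{x}),
\qquad \text{i.e.,} \qquad B^*(y^\delta - B\hat{x}) = \alpha v
\end{equation*}
for some $v \in \partial R(\hat{x})$. So it suffices to produce a bounded operator $B$ satisfying this identity for the specific $v$ whose existence is assumed.

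Next I would try the ansatz $B = (\alpha v) \otimes (t\,y^\delta)$, i.e., $Bx = t\,\langle x,\alpha v\rangle\, y^\delta$ for a scalar $t \in \mathbb{R}$ to be chosen. Then $B^*y = t\,\langle y,y^\delta\rangle\,\alpha v$, and the optimality identity reduces to the scalar equation
\begin{equation*}
t\,\|y^\delta\|^2 - t^2\,\|y^\delta\|^2\,\alpha\langle v,\hat{x}\rangle = 1.
\end{equation*}
Setting $s := \alpha\langle v,\hat{x}\rangle$, this is a quadratic in $t$ (or linear if $s=0$) whose discriminant is $\|y^\delta\|^2(\|y^\delta\|^2 - 4s)$. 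The key observation is that the hypothesis $\alpha \langle v,\hat{x}\rangle \leq \|y^\delta\|^2/4$ is exactly the condition $s \leq \|y^\delta\|^2/4$ that makes the discriminant non-negative (for $s\leq 0$ it is automatic, for $s>0$ it is precisely the assumption). Because $y^\delta \neq 0$, I can solve for a real $t$ and define $B$ by the above formula; this is clearly a bounded linear operator.

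Finally, I would dispose of edge cases and verify. If $v = 0$, then $\hat{x}$ minimizes $R$, and $B = 0$ trivially reduces the Tikhonov functional to $\tfrac12\|y^\delta\|^2 + \alpha R(x)$, of which $\hat{x}$ is a minimizer. Otherwise, with the constructed $B$ and chosen $t$, one plugs back to check that $B^*(y^\delta - B\hat{x}) = \alpha v$; together with $v \in \partial R(\hat{x})$ this gives the subdifferential inclusion, and convexity of the Tikhonov functional upgrades the stationarity to global minimality.

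The main obstacle is guessing a workable form for $B$; once one commits to a rank-one operator with range spanned by $y^\delta$ and kernel perpendicular to $v$, all that remains is elementary scalar algebra, and the hypothesized bound on $\alpha\langle v,\hat{x}\rangle$ turns out to be exactly the solvability condition of the resulting quadratic. A minor point to keep in mind is that $\partial R(\hat{x})$ is only guaranteed non-empty because such a $v$ is explicitly assumed in the statement, so no further subdifferential calculus is needed.
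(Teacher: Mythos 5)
Your proof is correct, and it reaches the conclusion by a noticeably leaner route than the paper. Both arguments share the same core strategy: reduce minimality to the first-order condition $B^*(y^\delta - B\hat{x}) = \alpha v$ (sufficient by convexity of the Tikhonov functional), construct a rank-one operator whose range is spanned by $y^\delta$, and observe that the hypothesis $\alpha\langle v,\hat{x}\rangle \leqslant \|y^\delta\|^2/4$ is precisely the nonnegativity of the discriminant of the resulting quadratic. The difference lies in the ansatz: the paper decomposes $v = \mu\hat{x} + v_\bot$ with $v_\bot \perp \hat{x}$ and takes $Bx = (\sigma_1\langle x,\hat{x}\rangle + \sigma_2\langle x,v_\bot\rangle)\,y^\delta$, which produces two coupled equations and forces some extra bookkeeping (the case $\mu = 0$, the choice of root for $\sigma_1$ so that $\sigma_2$ remains solvable). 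You instead align the operator directly with $v$, taking $Bx = t\langle x,\alpha v\rangle\,y^\delta$, so that $B^*(y^\delta - B\hat{x})$ is automatically a scalar multiple of $\alpha v$ and everything collapses to the single equation $t\|y^\delta\|^2 - t^2\|y^\delta\|^2\,\alpha\langle v,\hat{x}\rangle = 1$; the orthogonal decomposition and the second coefficient become unnecessary. Your treatment of the degenerate cases is also adequate: $v=0$ is settled by $B=0$ (then $\hat{x}$ minimizes $R$), and $\hat{x}=0$ or $\langle v,\hat{x}\rangle = 0$ falls under the linear case $s=0$ with $t = 1/\|y^\delta\|^2$, using $y^\delta \neq 0$. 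Note, as in the paper, that both proofs establish that $\hat{x}$ \emph{is a} minimizer; uniqueness of the $\arg\min$ is not claimed and would require strict convexity of $R$.
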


The proof can be found in the appendix \ref{app_proof_existsB_allg}.

\begin{lemma} \label{lem_notexistsB_allg}
Let $R \colon X \to [0,\infty]$ be a convex, coercive and weakly lower semicontinuous functional and $\hat{x} \in X$, $y^\delta \in Y$, $\alpha>0$ be arbitrary. If for every $v \in \partial R(\hat{x})$
\begin{equation}
\alpha \langle v,\hat{x}\rangle > \frac{\|y^\delta\|^2}{4}
\end{equation}
holds, then there exists no linear operator $B \in L(X,Y)$ which fulfills
\begin{equation}
\hat{x} = \mathrm{arg} \min_{x \in X} \frac{1}{2} \|B x - y^\delta\|^2 + \alpha R(x).
\end{equation}
\end{lemma}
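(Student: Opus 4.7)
The plan is to argue by contradiction using the first-order optimality condition, exactly mirroring the structure of Lemma \ref{lem_existsB_allg}. Assume for contradiction that there exists some $B \in L(X,Y)$ with $\hat{x} = \mathrm{arg}\min_{x \in X} \frac{1}{2}\|Bx - y^\delta\|^2 + \alpha R(x)$. Since the data-fidelity term is Fréchet differentiable with gradient $B^*(Bx - y^\delta)$, the optimality condition for the (convex) Tikhonov problem reads
\begin{equation*}
0 \in B^*(B\hat{x} - y^\delta) + \alpha \partial R(\hat{x}).
\end{equation*}
Hence there exists an explicit element $v \in \partial R(\hat{x})$ given by
\begin{equation*}
\alpha v = B^*y^\delta - B^*B\hat{x}.
\end{equation*}

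Next, I would test this $v$ against $\hat{x}$ to compute the quantity appearing in the hypothesis:
\begin{equation*}
\alpha \langle v, \hat{x}\rangle = \langle B^*y^\delta - B^*B\hat{x}, \hat{x}\rangle = \langle y^\delta, B\hat{x}\rangle - \|B\hat{x}\|^2.
\end{equation*}
The key (and essentially only) computational step is then to complete the square in $z := B\hat{x} \in Y$:
\begin{equation*}
\langle y^\delta, z\rangle - \|z\|^2 = \frac{\|y^\delta\|^2}{4} - \left\| z - \tfrac{1}{2}y^\delta \right\|^2 \;\leq\; \frac{\|y^\delta\|^2}{4}.
\end{equation*}
This yields $\alpha \langle v, \hat{x}\rangle \leqslant \|y^\delta\|^2/4$ for the particular subgradient $v$ constructed above, directly contradicting the blanket hypothesis that $\alpha\langle v, \hat{x}\rangle > \|y^\delta\|^2/4$ for \emph{every} $v \in \partial R(\hat{x})$.

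I do not expect a real obstacle here: the argument is effectively the contrapositive of the existence direction, and the completing-the-square trick is the same bound that makes $\|y^\delta\|^2/4$ the natural threshold in Lemma \ref{lem_existsB_allg}. The only thing to be slightly careful about is that $\partial R(\hat{x})$ must be nonempty for the hypothesis to be meaningful; this is automatic under the assumptions, since the existence of $v = \alpha^{-1}(B^*y^\delta - B^*B\hat{x})$ is produced by the optimality condition itself, so the contradiction is extracted without needing to separately invoke nonemptiness of the subdifferential.
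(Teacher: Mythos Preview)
Your proof is correct and follows essentially the same approach as the paper: argue by contradiction via the first-order optimality condition, set $v = -\alpha^{-1}B^*(B\hat{x}-y^\delta)$, compute $\alpha\langle v,\hat{x}\rangle = \langle y^\delta, B\hat{x}\rangle - \|B\hat{x}\|^2$, and bound this above by $\|y^\delta\|^2/4$. The only cosmetic difference is that the paper phrases the final inequality via Young's inequality rather than completing the square, which is the same estimate.
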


The proof can be found in the appendix \ref{app_proof_notexistsB_allg}. For given $y^\delta \in Y$, $\hat{x} \in X$ and a penalty term $R$, these lemmas state whether there exists a linear forward operator $B \colon X \to Y$ such that $\hat{x}$ is the Tikhonov solution w.r.t. $R$ of the inverse problem w.r.t.\ $y^\delta$. As a consequence, we can write the ADP minimization problem with a much simpler side constraint.

\begin{theorem} \label{theo_equivalence}
Let Assumption \ref{def_assumptions} hold.
Then, for all $y^\delta \in Y$, $y^\delta \neq 0$, $\alpha>0$ the formulation
\begin{align} \label{eq_altForm_allg}
\begin{split}
&\min_{x \in X} \frac{1}{2} \| A x - y^\delta\|^2 \\
\text{s.t.} \quad &\exists v \in \partial R(x): \quad \alpha\langle v,x\rangle \leqslant \frac{\|y^\delta\|^2}{4}
\end{split}
\end{align}
is equivalent to the ADP-Problem \eqref{eq_ADP}.
\end{theorem}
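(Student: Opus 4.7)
The plan is to recognize that the theorem is an almost immediate corollary of Lemmas~\ref{lem_existsB_allg} and \ref{lem_notexistsB_allg}, which together completely characterize the feasible set $U_{\alpha R}$. More precisely, I would first rewrite the ADP problem \eqref{eq_ADP} in its equivalent form
\begin{equation*}
\min_{x \in U_{\alpha R}} \frac{1}{2}\|Ax - y^\delta\|^2,
\end{equation*}
as already indicated in the text preceding the lemmas. From here the task reduces to showing that the admissible set $U_{\alpha R}$ coincides with
\begin{equation*}
V := \Bigl\{ x \in X \,\Big|\, \exists v \in \partial R(x): \ \alpha\langle v, x\rangle \leqslant \tfrac{\|y^\delta\|^2}{4} \Bigr\}.
\end{equation*}

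For the inclusion $V \subseteq U_{\alpha R}$, I would simply invoke Lemma~\ref{lem_existsB_allg}: given any $\hat{x} \in V$, the lemma directly produces an operator $B \in L(X,Y)$ for which $\hat{x}$ is the Tikhonov minimizer, hence $\hat{x} \in U_{\alpha R}$. For the reverse inclusion $U_{\alpha R} \subseteq V$, I would use the contrapositive of Lemma~\ref{lem_notexistsB_allg}: if $\hat{x} \in U_{\alpha R}$, then by definition there exists some $B \in L(X,Y)$ with $\hat{x} = \arg\min_x \tfrac{1}{2}\|Bx-y^\delta\|^2 + \alpha R(x)$, and Lemma~\ref{lem_notexistsB_allg} forbids this unless some $v \in \partial R(\hat{x})$ satisfies $\alpha\langle v, \hat{x}\rangle \leqslant \|y^\delta\|^2/4$; thus $\hat{x} \in V$.

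Combining these two inclusions gives $U_{\alpha R} = V$, and since \eqref{eq_altForm_allg} is exactly the minimization of $\tfrac{1}{2}\|Ax - y^\delta\|^2$ over $V$, equivalence with \eqref{eq_ADP} follows. The only technical point worth mentioning is the hypothesis $y^\delta \neq 0$, which is needed precisely because Lemma~\ref{lem_existsB_allg} requires it (if $y^\delta=0$ the constraint degenerates and the construction of $B$ in that lemma breaks down).

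Since the heavy lifting already happens in the two lemmas, I do not anticipate any real obstacle in the proof of the theorem itself; the entire argument is a short concatenation of set inclusions. If anything, the subtlety to watch out for is simply to quantify correctly over $v \in \partial R(\hat{x})$: Lemma~\ref{lem_existsB_allg} gives existence of a single good subgradient, while Lemma~\ref{lem_notexistsB_allg} requires the inequality to fail for every subgradient, and these match the "$\exists v$" in the constraint of \eqref{eq_altForm_allg} exactly (with its negation being a "$\forall v$" statement), so the logical passage between the two lemmas and the theorem goes through cleanly.
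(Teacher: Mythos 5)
Your proposal is correct and matches the paper's own argument: the paper likewise reduces Theorem~\ref{theo_equivalence} to the characterization of the admissible set via Lemma~\ref{lem_existsB_allg} (existence of $B$ under the $\exists v$ condition) and Lemma~\ref{lem_notexistsB_allg} (non-existence when the inequality fails for all $v$), so that the side constraint of \eqref{eq_altForm_allg} describes exactly $U_{\alpha R}$. Your slightly more explicit treatment of the two set inclusions and of the quantifier bookkeeping is a fine elaboration of the same short argument.
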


\begin{proof}
According to Lemma \ref{lem_existsB_allg} and Lemma \ref{lem_notexistsB_allg} there exists a linear operator $B \in L(X,Y)$ such that
\begin{equation}
    \hat{x} = \mathrm{arg} \min_{x \in X} \frac{1}{2} \|B x - y^\delta\|^2 + \alpha R(x)
\end{equation}
if and only if $\hat{x}$ fulfills the side constraint of \eqref{eq_altForm_allg}.
\end{proof}

\begin{remark} \label{rem_l2_equivalence}
For the standard Tikhonov penalty term $R(x) = \frac{1}{2}\|x\|^2$, it holds $\partial R(x) = x$. In this case we get
\begin{align}
\begin{split} \label{eq_altFormtik}
\min_{x \in X} \frac{1}{2}\|A x - y^\delta\|^2\\
\text{s.t.} \quad \|x\|^2 \leqslant \frac{\|y^\delta\|^2}{4 \alpha}
\end{split}
\end{align}
as an equivalent formulation of the ADP problem \eqref{eq_ADP}. For $r = \|y^\delta\|^2/(4\alpha)$, this equals the Ivanov regularization method 
\begin{equation}
\begin{split} \label{eq_Ivanov}
\min_{x \in X} \frac{1}{2}\|A x - y^\delta\|^2\\
\text{s.t.} \quad \|x\|^2 \leqslant r.
\end{split}
\end{equation}
As \cite{Vasin1970RelationshipOS} shows, this method is in fact equivalent to the Tikhonov method
\begin{equation}
    \min_{x \in X} \frac{1}{2}\|A x - y^\delta\|^2 + \frac{\tilde{\alpha}}{2} \|x\|^2
\end{equation}
for some $\tilde{\alpha}$ dependent on $y^\delta$ and $r$. We note that the Tikhonov parameter $\tilde{\alpha}$ may be equal to zero and in particular it differs from the parameter $\alpha$ of the ADP problem (see section \ref{ch_earlystopping}). 
\end{remark}

\begin{remark} \label{rem_bad_penaltyterm}
There are also cases in which the side constraint of \eqref{eq_altForm_allg} defines a non-convex feasible set. Then, the ADP problem is more difficult to solve. We give a simple two-dimensional example with the penalty term $R \colon \R^2 \to [0,\infty)$,
\begin{equation}
R(x_1, x_2) =
\begin{cases}
3 \mydot |x_1 - 5| & \text{for } 3 \mydot |x_1 - 5| \geqslant |x_2|,\\
|x_2|  & \text{for } |x_2| > 3 \mydot |x_1 - 5|.
\end{cases}    
\end{equation}
This functional has a non-centered minimum at $(5,0)^{\mathrm{T}}$ and the absolute value of its gradient $|\partial R(x)|$ is strongly dependent on the direction
. Because of these properties, it's easy to show that the term $\langle v,x \rangle$, $v \in \partial R(x)$ in the side constraint of \eqref{eq_altForm_allg} is non-convex w.r.t.\ $x \in \R^2$.
\end{remark}

%
%

\subsection{Parameter choice and early stopping} \label{ch_earlystopping}

By construction of the ADP model, we expect it in application to act like DIP.
But in the previous section it turned out that ADP behaves in fact equivalent to classical methods like Tikhonov's. When we apply ADP to an inverse problem, the question arises whether ADP can also deliver something that is ``new'' and not equivalent to a Tikhonov solution. This section presents, how the model has to be changed to produce ADP solutions that are more similar to DIP solutions. In the same time, we derive a strategy for choosing the parameter $\alpha$ of the ADP model.

When we compare the ADP method
\begin{equation} \label{eq_adp_alpha}
\begin{split}
    &\min_{B \in L(X,Y)} \frac{1}{2} \| A x(B) - y^\delta \|^2 \\
    \text{s.t.} \quad &x(B) = \mathrm{arg} \min_{x \in X} \frac{1}{2} \| B x - y^\delta \|^2 + \frac{\alpha_{\text{ADP}}}{2} \|x\|^2 
\end{split}
\end{equation}
to the equivalent (see Remark \ref{rem_l2_equivalence}) Tikhonov method
\begin{equation} \label{eq_tik_alpha}
    \min_{x \in X} \frac{1}{2} \| A x - y^\delta \|^2 + \frac{\tilde{\alpha}}{2} \|x\|^2,
\end{equation}
we have to make sure not to confuse the parameters $\alpha_{\text{ADP}}$ and $\tilde{\alpha}$ of both models with each other. At first we state the following relation between these parameters.

\begin{lemma} \label{lem_parameters}
If the solutions of \eqref{eq_adp_alpha} and \eqref{eq_tik_alpha} coincide, $\tilde{\alpha} \leqslant \alpha_{\text{ADP}}$ holds. Equality of the parameters could only occur if $y^\delta$ 
was in the kernel of $A^*$ or a singular vector of $A$.
\end{lemma}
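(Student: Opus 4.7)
The plan is to use the equivalence of the ADP problem with Ivanov's method from Remark \ref{rem_l2_equivalence} together with a spectral argument on the normal equation of \eqref{eq_tik_alpha}. By Remark \ref{rem_l2_equivalence}, problem \eqref{eq_adp_alpha} is equivalent to the Ivanov problem with radius $r = \|y^\delta\|^2/(4\alpha_{\text{ADP}})$, so any common solution $x^*$ must satisfy $\|x^*\|^2 \leqslant r$. If the Ivanov constraint is inactive at $x^*$, then $x^*$ is the minimum-norm least-squares solution and \eqref{eq_tik_alpha} reproduces it with $\tilde\alpha = 0$, giving $\tilde\alpha \leqslant \alpha_{\text{ADP}}$ trivially. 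From now on I assume the constraint is active, i.e.\ $\|x^*\|^2 = \|y^\delta\|^2/(4\alpha_{\text{ADP}})$.

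The optimality condition for \eqref{eq_tik_alpha} yields $x^* = (A^*A + \tilde\alpha I)^{-1} A^* y^\delta$. Using a singular value decomposition of $A$ with singular values $\sigma_i$ and orthonormal singular vectors $u_i \in Y$ spanning $\ker(A^*)^\perp$, one obtains
\begin{equation*}
\|x^*\|^2 = \sum_i \frac{\sigma_i^2}{(\sigma_i^2 + \tilde\alpha)^2}\, |\langle y^\delta, u_i\rangle|^2.
\end{equation*}
The elementary inequality $(\sigma_i^2 - \tilde\alpha)^2 \geqslant 0$, rewritten as $4\tilde\alpha \sigma_i^2 \leqslant (\sigma_i^2 + \tilde\alpha)^2$, applied termwise gives
\begin{equation*}
4\tilde\alpha \|x^*\|^2 \leqslant \sum_i |\langle y^\delta, u_i\rangle|^2 \leqslant \|y^\delta\|^2.
\end{equation*}
Substituting $\|x^*\|^2 = \|y^\delta\|^2/(4\alpha_{\text{ADP}})$ and dividing by $\|y^\delta\|^2 > 0$ gives $\tilde\alpha \leqslant \alpha_{\text{ADP}}$, which is the first claim.

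For the equality case I trace where the two estimates can be tight. Equality in the termwise step forces $\sigma_i^2 = \tilde\alpha$ whenever $\langle y^\delta, u_i\rangle \neq 0$, i.e.\ the spectral content of $y^\delta$ is concentrated in a single singular subspace, so $y^\delta$ is a singular vector of $A$; equality in the Bessel-type inequality requires $y^\delta \perp \ker(A^*)$, and the only way to avoid that requirement is the complementary degeneracy $y^\delta \in \ker(A^*)$, in which case $A^* y^\delta = 0$ and both problems collapse to $x^* = 0$. These are exactly the two degenerate cases listed in the statement. The main technical obstacle I anticipate is the clean handling of the possibly continuous spectrum of $A^*A$; but since the pointwise inequality $(t - \tilde\alpha)^2 \geqslant 0$ translates via the functional calculus of $A^*A$ into an operator inequality directly applicable to $\|x^*\|^2 = \langle (A^*A + \tilde\alpha I)^{-2} A^*A\, A^* y^\delta,\, A^* y^\delta \rangle$, the argument and the identification of the equality cases carry over without the discrete SVD notation.
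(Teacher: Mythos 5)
Your argument is correct in substance but takes a genuinely different route from the paper. For the inequality, the paper never touches the spectral decomposition: it compares the two minimizers directly, using that the ADP solution $\hat{x}$ has a smaller residual than the feasible candidate $x(A)$ while $x(A)$ has a smaller Tikhonov value, which yields $\|x(A)\|\leqslant\|\hat{x}\|$ and then $\tilde{\alpha}\leqslant\alpha_{\text{ADP}}$ by monotonicity of the norm of Tikhonov solutions in the parameter; this needs no case split on whether the Ivanov constraint is active. You instead prove the quantitative bound $\|x_{\tilde{\alpha}}\|^2\leqslant\|y^\delta\|^2/(4\tilde{\alpha})$ from $4\tilde{\alpha}\sigma_i^2\leqslant(\sigma_i^2+\tilde{\alpha})^2$ and combine it with the active constraint $\|x^*\|^2=\|y^\delta\|^2/(4\alpha_{\text{ADP}})$; for the equality case the paper works from the normal equation plus the Cauchy--Schwarz and Young equality conditions to get $A\hat{x}=\tfrac{1}{2}y^\delta$ and hence $AA^*y^\delta=\alpha_{\text{ADP}}y^\delta$, whereas you read the same conclusion off the two places where your spectral chain must be tight. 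Your route has the advantage of handling both claims with one inequality and making the eigenvector characterization transparent; the paper's route is more elementary (no SVD or spectral theorem needed) and avoids your extra bookkeeping about constraint activity. Two small blemishes in your write-up, neither fatal: in the inactive case the premise only forces $\tilde{\alpha}x^*=0$, so you should note that either $\tilde{\alpha}=0$ or $x^*=0$ with $A^*y^\delta=0$ (the kernel degeneracy), rather than asserting $\tilde{\alpha}=0$ outright; and your final functional-calculus identity carries a spurious factor of $A^*A$ --- the correct expression is $\|x^*\|^2=\langle (A^*A+\tilde{\alpha}I)^{-2}A^*y^\delta,\,A^*y^\delta\rangle=\langle (AA^*+\tilde{\alpha}I)^{-2}AA^*y^\delta,\,y^\delta\rangle$, to which the pointwise bound $t/(t+\tilde{\alpha})^2\leqslant 1/(4\tilde{\alpha})$ applies as you intend.
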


The proof can be found in the appendix \ref{app_proof_parameters}. In general, we can assume that $\tilde{\alpha} < \alpha_{\text{ADP}}$ holds. So in any application it makes sense to choose the ADP parameter greater than one would choose the parameter of a Tikhonov model. But independent of the parameter choice, the ADP solution will always be equivalent to a Tikhonov solution (Remark \ref{rem_l2_equivalence}). To make ADP more similar to DIP, we apply early stopping \cite[section 7.8]{Goodfellow-et-al-2016}. This strategy is often used in the application of DIP but wasn't considered for the ADP model yet.

For a given inverse problem, we could solve the ADP problem \eqref{eq_adp_alpha} with a gradient descent algorithm w.r.t.\ the operator $B$ (see section \ref{ch_numeric} for details) and terminate this iteration early. Taking $B^0 = A$ as initial value leads by definition to $x(B^0)$ being equal to the Tikhonov solution w.r.t.\ the parameter $\alpha_{\text{ADP}}$. We assume the iteration to converge successfully towards the minimizer $\hat{x}$ of \eqref{eq_adp_alpha}. Since $\hat{x}$ is also the minimizer of \eqref{eq_tik_alpha}, the limit of the iteration is also a Tikhonov solution but w.r.t.\ the parameter $\tilde{\alpha}$. Because of $\tilde{\alpha} < \alpha_{\text{ADP}}$, the starting solution $x(B^0)$ is a stronger regularized Tikhonov solution than the limit $\hat{x}$ of the iteration (see figure \ref{fig_earlystopping}).

\begin{figure}[ht]
    \centering
    \includegraphics[scale=1]{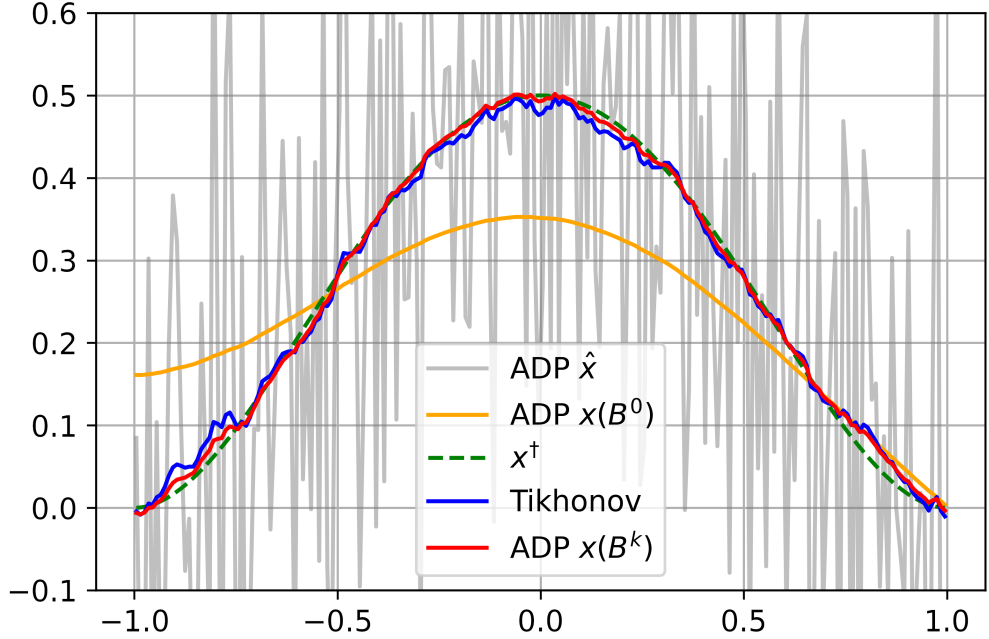}
    \caption{Comparison of ADP solutions during gradient descent to the Tikhonov method
    (orange: at the start of the gradient descent, red: by early stopping, gray: limit of the gradient descent).
    The forward operator is an integration like in \eqref{eq_forwardoperators} and the data $y^\delta$ is contaminated with gaussion noise (PSNR=40).
    The Tikhonov parameter and the stopping criterion for ADP were chosen a posteriori to achieve the most accurate reconstructions.}
    \label{fig_earlystopping}
\end{figure}

If we apply early stopping, we take some $x(B^k)$ in between, which in general does not equal a Tikhonov solution w.r.t.\ $A$ (see figure \ref{fig_earlystopping}).
This strategy makes sense if we expect $x(B^k)$ to be a better solution than (the Tikhonov solutions) $x(B^0)$ and $\hat{x}$. That could be the case if $x(B^0)$ is slightly over-regularized and $\hat{x}$ is slightly under-regularized. Because then, the optimal regularization would lay in between.

Now, we come back to the parameter choice. If we have a criterion for estimating a suitable Tikhonov parameter $\alpha_{\text{Tik}}$ for a given inverse problem, we should try to choose $\alpha_{\text{ADP}}$ in a way that
\begin{equation}
    \tilde{\alpha} < \alpha_{\text{Tik}} < \alpha_{\text{ADP}}
\end{equation}
holds. Because then, $x(B^0)$ will be slightly over-regularized and $\hat{x}$ slightly under-regularized, as proposed.

In the example of figure \ref{fig_earlystopping}, we see that the ADP solution $x(B^k)$, obtained with early stopping, is a better approximation for the ground truth $x^\dagger$ than the most accurate Tikhonov solution, which corresponds to $\alpha_{\text{Tik}}$. But this result is strongly dependent on the particular inverse problem. The Tikhonov method is optimal for data that is normally distributed. If the given distribution differs from that, it is theoretically possible that ADP with early stopping produces a better solution than the Tikhonov method.

Finally, we want to include the early stopping strategy directly into the ADP model to be able to investigate its effect on the regularization of inverse problems. Early stopping enforces the iterated variable to stay close to the initial value. 
Because of $B^0=A$, we can expect $\|B^k - A\|$ to be small for small $k$. This leads to using $\|B^k - A\|$ as an additional penalty term in the ADP problem, which has a similar effect as early stopping \cite[section 2.3]{bishop_1995}, \cite[section 4]{sjoberg1994}. What comes out is the ADP-$\beta$ model
\begin{equation}
\begin{split} 
    \min_{B \in L(X,Y)} \frac{1}{2} \| A x(B) - y^\delta\|^2  + \beta \|B-A\|^2\\
    \text{s.t.} \quad x(B) =  \mathrm{arg} \min_{x \in X} \, \frac{1}{2} \|Bx - y^\delta\|^2 + \alpha R(x).
\end{split}
\end{equation}

%
%

\subsection{Properties of ADP}

The equivalence between ADP and the Ivanov method (with general convex penalty term $R$), shown in section \ref{ch_equivalence}, allows to obtain some regularization properties (existence, stability, convergence) for ADP.
We suppose that Assumption \ref{def_assumptions} holds. Besides the functional
\begin{equation} \label{eq_tildeR}
    \tilde{R}(x) = \min_{v \in \partial R(x)} \langle v, x \rangle
\end{equation}
is assumed to be well-defined. Because then, the side constraint of \eqref{eq_altForm_allg} can be formulated as
\begin{equation} \label{eq_sideconstraint}
    \tilde{R}(x) \leqslant \frac{\|y^\delta\|^2}{4 \alpha}.
\end{equation}
Due to $\langle v, x \rangle = R(x) + R^*(v)$ for $v \in \partial R(x)$, where $R^*$ denotes the convex conjugated functional, coercivity of $R$ implies coercivity of $\tilde{R}$.

\begin{remark}[Existence]
There exists a solution of the ADP problem \eqref{eq_ADP} if the functional $\tilde{R}$, defined in \eqref{eq_tildeR}, is weakly lower semicontinuous. This follows from the equivalence theorem~\ref{theo_equivalence} and \cite[Theorem 2.1]{Vogel1990}
about the existence of Ivanov solutions.
\end{remark}

Uniqueness of solutions and stability w.r.t.\ the data $y^\delta$ is less trivial. First, the right hand side of the side constraint \eqref{eq_sideconstraint} is dependent on $y^\delta$, which isn't the case for ordinary Ivanov problems. Secondly, we know from Remark \ref{rem_bad_penaltyterm}, that the constraint \eqref{eq_sideconstraint} does not always define a convex feasible set. Nevertheless, for the special case $R(x) = \frac{1}{2}\|x\|^2$ we can obtain a convenient stability result. In this case, $\tilde{R}(x) = \|x\|^2$ is a strictly convex functional.
Additionally, if the given inverse problem is ill-posed, we can assume the ADP solutions to fulfill the constraint \eqref{eq_sideconstraint} with equality. Under these conditions, the following theorem provides stability of ADP.

\begin{theorem}[Stability] \label{theo_stabilityADP}
For $R(x) = \frac{1}{2}\|x\|^2$, let $(y_k) \subset Y$ be a sequence with ${y_k \to \hat{y} \in Y}$ and assume that the corresponding ADP solutions $x_k$, $\hat{x}$ are unique and fulfill the side constraint in (\ref{eq_altFormtik}) with equality. Then ADP is stable, which means $x_k \to \hat{x}$.
\end{theorem}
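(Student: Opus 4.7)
My plan is to exploit the Ivanov-type reformulation from Remark \ref{rem_l2_equivalence}: each $x_k$ is the unique minimizer of $\frac{1}{2}\|Ax - y_k\|^2$ subject to $\|x\|^2 \leqslant \|y_k\|^2/(4\alpha)$, and analogously for $\hat{x}$. Under the equality assumption, $\|x_k\|^2 = r_k^2$ and $\|\hat{x}\|^2 = \hat{r}^2$, where $r_k := \|y_k\|/(2\sqrt{\alpha})$ and $\hat{r} := \|\hat{y}\|/(2\sqrt{\alpha})$, so in particular $r_k \to \hat{r}$. The case $\hat{y} = 0$ is trivial because $\|x_k\| = r_k \to 0$ forces $x_k \to 0 = \hat{x}$; from now on I assume $\hat{y} \neq 0$, hence $\hat{r} > 0$.

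First I would note that $(x_k)$ is bounded, so along any subsequence I can extract a further weakly convergent one $x_{k_j} \rightharpoonup \tilde{x}$. Weak lower semicontinuity of the norm yields $\|\tilde{x}\|^2 \leqslant \liminf_j r_{k_j}^2 = \hat{r}^2$, so $\tilde{x}$ is feasible for the limit Ivanov problem. To upgrade this to optimality, I introduce the rescaled competitor $\tilde{x}_{k_j} := (r_{k_j}/\hat{r})\hat{x}$: it satisfies $\|\tilde{x}_{k_j}\|^2 = r_{k_j}^2$, hence is feasible for the $k_j$-th problem, and converges strongly to $\hat{x}$. Optimality of $x_{k_j}$ then yields
\begin{equation*}
\|Ax_{k_j} - y_{k_j}\|^2 \leqslant \|A \tilde{x}_{k_j} - y_{k_j}\|^2.
\end{equation*}
Letting $j \to \infty$, the right-hand side converges to $\|A\hat{x} - \hat{y}\|^2$ by continuity of $A$ together with $y_{k_j} \to \hat{y}$, while weak continuity of $A$ and weak lower semicontinuity of the norm give $\liminf_j \|Ax_{k_j} - y_{k_j}\|^2 \geqslant \|A\tilde{x} - \hat{y}\|^2$. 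Combining these with feasibility of $\tilde{x}$ and uniqueness of $\hat{x}$ forces $\tilde{x} = \hat{x}$.

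Since every weak subsequential limit equals $\hat{x}$, the whole bounded sequence satisfies $x_k \rightharpoonup \hat{x}$. Combined with $\|x_k\|^2 = r_k^2 \to \hat{r}^2 = \|\hat{x}\|^2$, the Radon--Riesz property of the Hilbert space $X$ promotes this to strong convergence $x_k \to \hat{x}$.

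The main obstacle I anticipate is that the feasible set of the Ivanov problem depends on the data $y_k$, so the classical stability argument (which assumes a fixed feasible set) does not apply off the shelf. The key device is the rescaling $\tilde{x}_{k_j} = (r_{k_j}/\hat{r})\hat{x}$, which simultaneously produces a competitor that is feasible for the $k_j$-th varying constraint and converges to $\hat{x}$; the degeneracy of this rescaling at $\hat{r} = 0$ is precisely what requires treating $\hat{y} = 0$ as a separate trivial case. The assumed equality in the side constraint is essential, as it supplies the norm convergence $\|x_k\| \to \|\hat{x}\|$ needed to pass from weak to strong convergence.
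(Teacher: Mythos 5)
Your proposal is correct and takes essentially the same route as the paper's proof: boundedness and weak subsequential compactness, identification of every weak limit with $\hat{x}$ via the minimizing property and uniqueness, a subsequence-of-subsequences argument for weak convergence of the whole sequence, and finally the assumed equality $\|x_k\|^2 = \|y_k\|^2/(4\alpha)$ to upgrade weak to strong convergence. The only minor difference is how the data-dependent feasible set is handled: you rescale $\hat{x}$ to obtain an exactly feasible competitor for the $k$-th problem (which is why you must treat $\hat{y}=0$ separately), while the paper compares against arbitrary $x$ with $\|x\|^2 \leqslant \|\hat{y}\|^2/(4\alpha) - \varepsilon$, which are eventually feasible, and then lets $\varepsilon \to 0$.
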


The proof can be found in the appendix \ref{app_proof_stabilityADP}.

To obtain a convergence result for ADP, it makes sense to use standard convergence theorems, either of the Tikhonov method \cite[Theorem 4.4]{Hofmann_2007} or of the Ivanov method \cite[Theorem 2.5]{Kaltenbacher_2018}, \cite[Theorem 3]{Seidman_1989}. They differ especially in the source conditions they require for the ground truth $x^\dagger$ and in the parameter choice rules. If we assume $\tilde{R}$ to be convex, by \cite[Theorem 2]{Vasin1970RelationshipOS} and the equivalence theorem \ref{theo_equivalence}, the Tikhonov problem
\begin{equation} \label{eq_tikhonovtilde}
    \min_{x \in X} \frac{1}{2} \|A x - y^\delta\|^2 + \tilde{\alpha} \tilde{R}(x)
\end{equation}
is equivalent to the ADP formulation \eqref{eq_altForm_allg} for suitable chosen $\tilde{\alpha} \geqslant 0$.

\begin{remark}[Convergence]
Because of the equivalence between \eqref{eq_altForm_allg} and \eqref{eq_tikhonovtilde}, the convergence of ADP solutions $x_\alpha^\delta$ to $x^\dagger$ for vanishing $\delta$ w.r.t.\ the Bregman distance can be directly derived from Tikhonov convergence theorems. But the ADP parameter $\alpha$ does not coincide with the Tikhonov parameter $\tilde{\alpha}$. That's why, for ADP we do not get an explicit parameter choice rule like $\alpha \sim \delta$. Besides, a source condition for $x^\dagger$ has to be fulfilled by the functional $\tilde{R}$ (defined in \eqref{eq_tildeR}) and not by the penalty term $R$.
\end{remark}

%
%

\subsection{Properties of ADP-\textbeta}

For proving the existence of solutions of variational regularization schemes, \cite[Theorem 3.1]{Hofmann_2007} provides a useful framework. If we want to apply this for ADP-$\beta$,
it has to be ensured that $B \mapsto x(B)$ is weak-weak continuous \cite[Assumptions 2.1]{Hofmann_2007}. But unfortunately, in general this is not the case. 

To obtain convenient regularization properties anyway, we restrict to ${X=Y=L^2(\Omega)}$ with $\Omega \subset \R^n$. In this setting, we consider a forward operator $A \colon X \to Y$ that can be parametrized by a function $f \in L^p(\Omega)$, $p \in [1,\infty)$. More precisely, we take a continuous, bilinear operator ${T \colon L^p(\Omega) \times X \to Y}$ and define
\begin{equation}
    Ax = T(f,x).
\end{equation}
The same parametrization of operators by functions is used in \cite{bleyer2013}. One typical example would be a convolutional operator $T(f,x) = f*x$. 

The crucial idea is the additional restriction $f \in W^{1,p}(\Omega)$ to take advantage of the compact embedding of Sobolev spaces $W^{1,p}(\Omega) \subset L^p(\Omega)$. A similar strategy is used in \cite{Kluth_2020} for achieving weak-weak continuity of the forward operator.

We define the parametrized ADP-$\beta$ approach as 
\begin{align} \label{eq_ADP-param}
\begin{split}
\min_{g \in W^{1,p}} \frac{1}{2} \| T(f,x_g) - y^\delta\|_{L^2}^2  &+ \beta\|f-g\|_{W^{1,p}}^2 \\
\text{s.t.} \quad x_g = \mathrm{arg} \min_{x \in L^2} \, \frac{1}{2} \|T(g,x) - y^\delta\|_{L^2}^2 &+ \alpha R(x).
\end{split}
\end{align}
In particular, this can be interpreted as a Tikhonov method for solving the nonlinear inverse problem $F(g^\dagger) = y^\dagger$ with the forward operator $F \colon W^{1,p}(\Omega) \to Y$, $F(g) = T(f, x_g)$.

\begin{remark}[Existence] \label{rem_betaexistence}
The forward operator $F$ is weak-strong continuous if the penalty term $R$ is strongly convex. This holds, because weak convergence $g_k \rightharpoonup g$ w.r.t.\ $W^{1,p}(\Omega)$ implies convergence by norm in $L^p(\Omega)$, by Theorem \ref{theo_xBstetig} the convergence of $x_{g_k} \to x_g$ follows, and the bilinear operator $T$ is continuous. This is more than enough to fulfill the assumptions of \cite[Theorem 3.1]{Hofmann_2007}, which provides the existence of a solution of \eqref{eq_ADP-param}.
\end{remark}

A weak stability result for the parametrized ADP-$\beta$ method could be directly obtained from \cite[Theorem 3.2]{Hofmann_2007}. But this particular framework even allows to prove strong stability.

\begin{theorem}[Stability] \label{theo_stabilitybeta}
For $p=2$, let $R$ be a strongly convex penalty term, $(y_k) \subset Y$ a convergent sequence with $y_k \to \hat{y}$ and $(g_k) \subset W^{1,p}(\Omega)$ the corresponding solutions of the ADP\nobreakdash-$\beta$ problem \eqref{eq_ADP-param}. Then, $(g_k)$ has a convergent subsequence and the limit of each subsequence is an ADP-$\beta$ solution corresponding to $\hat{y}$.
\end{theorem}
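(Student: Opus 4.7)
The plan is to follow the standard three-step stability argument (boundedness, extraction of a weakly convergent subsequence, limit passage), adapted to exploit two specific features of the ADP-$\beta$ setting: the compact embedding $W^{1,p}(\Omega) \hookrightarrow L^p(\Omega)$ used in Remark~\ref{rem_betaexistence}, and the Hilbert space structure of $W^{1,2}$ available thanks to $p=2$.

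First I would establish uniform boundedness of $(g_k)$ in $W^{1,2}(\Omega)$. Because $g_k$ minimizes the ADP-$\beta$ functional at $y_k$, testing against the trivial competitor $g=f$ (which annihilates the penalty $\beta\|f-g\|_{W^{1,2}}^2$) yields
\begin{equation*}
\beta\,\|f - g_k\|_{W^{1,2}}^2 \;\leqslant\; \tfrac{1}{2}\|T(f,x_f) - y_k\|_{L^2}^2,
\end{equation*}
and the right hand side is bounded since $(y_k)$ converges. Reflexivity of $W^{1,2}$ then produces a weakly convergent subsequence $g_{k_j} \rightharpoonup \hat{g}$.

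Second, I would transfer this convergence to the data-fidelity term where strong convergence is required. The compact embedding gives $g_{k_j} \to \hat{g}$ in $L^2(\Omega)$. Continuity of the bilinear map $T$ then implies that $T(g_{k_j},\cdot) \to T(\hat{g},\cdot)$ in the operator norm on $L(L^2,L^2)$, so Theorem~\ref{theo_xBstetig} (applicable by strong convexity of $R$) produces $x_{g_{k_j}} \to x_{\hat{g}}$ strongly in $L^2$, and hence $T(f,x_{g_{k_j}}) \to T(f,x_{\hat{g}})$ strongly as well. Now I would exploit optimality of $g_{k_j}$ at $y_{k_j}$ against the fixed competitor $\hat{g}$:
\begin{equation*}
\tfrac{1}{2}\|T(f,x_{g_{k_j}}) - y_{k_j}\|_{L^2}^2 + \beta\,\|f - g_{k_j}\|_{W^{1,2}}^2 \;\leqslant\; \tfrac{1}{2}\|T(f,x_{\hat{g}}) - y_{k_j}\|_{L^2}^2 + \beta\,\|f - \hat{g}\|_{W^{1,2}}^2.
\end{equation*}
Taking $\limsup$ on both sides, using $y_{k_j}\to\hat{y}$ and the strong convergences just established, gives $\limsup \|f-g_{k_j}\|_{W^{1,2}}^2 \leqslant \|f-\hat{g}\|_{W^{1,2}}^2$. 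Combined with the weak lower semicontinuity bound $\|f-\hat{g}\|_{W^{1,2}} \leqslant \liminf \|f-g_{k_j}\|_{W^{1,2}}$ this forces norm convergence $\|f-g_{k_j}\|_{W^{1,2}} \to \|f-\hat{g}\|_{W^{1,2}}$. In the Hilbert space $W^{1,2}$, weak convergence plus convergence of norms upgrades to strong convergence $g_{k_j} \to \hat{g}$ in $W^{1,2}$, which is exactly the first claim.

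Finally, to verify that any such limit $\hat{g}$ is an ADP-$\beta$ solution for $\hat{y}$, I would repeat the optimality inequality with an arbitrary competitor $\tilde{g} \in W^{1,2}(\Omega)$ in place of $\hat{g}$, pass to $\liminf$ on the left (using weak lower semicontinuity of the $W^{1,2}$-norm and the strong convergences of the data-fidelity terms) and to the limit on the right, to obtain the minimization inequality at $\hat{y}$. The main obstacle I expect is step three: the abstract framework of \cite{Hofmann_2007} only delivers weak stability for a setting like this, and producing strong stability requires both the Hilbert geometry available at $p=2$ (to convert weak plus norm convergence into strong convergence) and the strong continuity of $B \mapsto x(B)$ in Theorem~\ref{theo_xBstetig}, which in turn relies crucially on the strong convexity assumption on $R$.
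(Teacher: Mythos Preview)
Your proposal is correct and follows essentially the same route as the paper: boundedness from testing against $g=f$, weak compactness in $W^{1,2}$, upgrading to strong convergence of $x_{g_k}$ via the compact embedding $W^{1,2}\hookrightarrow L^2$ together with Theorem~\ref{theo_xBstetig}, and then the optimality inequality combined with weak lower semicontinuity of the $W^{1,2}$-norm to force norm (hence strong) convergence. The only organizational difference is that the paper writes a single chain of inequalities with a generic competitor $g$, first deducing that $\hat{g}$ is a minimizer and then specializing to $g=\hat{g}$ to squeeze out $\|g_k-f\|_{W^{1,2}}\to\|\hat{g}-f\|_{W^{1,2}}$, whereas you perform these two steps in the opposite order; neither ordering buys anything over the other.
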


The proof can be found in the appendix \ref{app_proof_stabilitybeta}.

While proving existence and stability of ADP-$\beta$-solutions required a smart parametrization and the use of compact embeddings, a convergence theorem (w.r.t.\ the Bregman distance) can be proven for the general formulation \eqref{eq_ADPbeta}. Similar to classical results like \cite[Theorem~4.4]{Hofmann_2007} or \cite[Theorem~2]{Burger_2004}, we need to assume a source condition 
\begin{equation} \label{eq_sourcecondition}
    \exists w \in Y: \quad A^*w \in \partial R (x^\dagger).
\end{equation}
The parameter $\beta$ turns out to be really helpful for obtaining a convergence result.

\begin{theorem}[Convergence] \label{theo_convergencebeta}
Let Assumption \ref{def_assumptions} hold, $x^\dagger$ be an $R$-minimizing solution of \eqref{eq_inverseproblem} which fulfills the source condition \eqref{eq_sourcecondition} and assume there exist ADP-$\beta$ solutions $\hat{x}_\alpha^\delta$ of \eqref{eq_ADPbeta}. If $\alpha$ is chosen proportional to $\delta$, there exists $d \in D_R(\hat{x}_\alpha^\delta, x^\dagger)$ which fulfills $d = O(\delta)$.
\end{theorem}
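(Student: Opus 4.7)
The plan is to adapt the classical Burger--Osher source-condition trick (as in the proof of \cite[Theorem~4.4]{Hofmann_2007}) to the nested minimization in ADP-$\beta$. The key observation is that $\hat{x}_\alpha^\delta = x(\hat{B})$ solves a Tikhonov problem with the a priori unknown operator $\hat{B}$ rather than $A$, so one would like to ``transfer'' the source condition $A^*w \in \partial R(x^\dagger)$ from $A$ to $\hat{B}$. The extra penalty $\beta\|B-A\|^2$, combined with the obvious competitor $B=A$ in the outer problem, is exactly what provides the quantitative control needed for this transfer.

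First I would evaluate the outer functional $\Phi(B) := \tfrac12\|Ax(B) - y^\delta\|^2 + \beta\|B-A\|^2$ at $B = A$, where $x(A)$ is the ordinary Tikhonov minimizer for $A$ with parameter $\alpha$. Applying the classical completion-of-squares argument to $x(A)$ under the source condition yields $\|Ax(A) - y^\delta\| \leq \delta + 2\alpha\|w\|$, hence $\Phi(A) \leq \tfrac12(\delta + 2\alpha\|w\|)^2$. Using $\Phi(\hat{B}) \leq \Phi(A)$ we immediately obtain
\begin{equation*}
\|A\hat{x}_\alpha^\delta - y^\delta\| \leq \delta + 2\alpha\|w\|, \qquad \beta\|\hat{B} - A\|^2 \leq \tfrac12(\delta + 2\alpha\|w\|)^2,
\end{equation*}
so with $\alpha \sim \delta$ both $\|A\hat{x}_\alpha^\delta - y^\delta\|$ and $\|\hat{B}-A\|$ are $O(\delta)$. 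Testing the inner Tikhonov optimality of $\hat{x}_\alpha^\delta$ against $x^\dagger$ gives $\alpha R(\hat{x}_\alpha^\delta) \leq \alpha R(x^\dagger) + \tfrac12\|\hat{B}x^\dagger - y^\delta\|^2$, and since $\|\hat{B}x^\dagger - y^\delta\| \leq \|\hat{B}-A\|\|x^\dagger\| + \delta = O(\delta)$, coercivity of $R$ provides a uniform bound on $\|\hat{x}_\alpha^\delta\|$.

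Next I would expand the Bregman distance with the particular subgradient $p = A^*w$ (allowed by the source condition) and insert the inner Tikhonov inequality for $R(\hat{x}_\alpha^\delta) - R(x^\dagger)$. The dual pairing is rewritten as
\begin{equation*}
-\alpha\langle w, A\hat{x}_\alpha^\delta - y^\dagger\rangle = -\alpha\langle w, \hat{B}\hat{x}_\alpha^\delta - y^\delta\rangle - \alpha\langle w, (A-\hat{B})\hat{x}_\alpha^\delta\rangle - \alpha\langle w, y^\delta - y^\dagger\rangle,
\end{equation*}
and completing the square on the $\hat{B}\hat{x}_\alpha^\delta - y^\delta$ terms (exactly as in Burger--Osher) leads to the master estimate
\begin{equation*}
\alpha D_R \leq \tfrac12\|\hat{B}x^\dagger - y^\delta\|^2 + \tfrac{\alpha^2}{2}\|w\|^2 + \alpha\|w\|\|\hat{B}-A\|\|\hat{x}_\alpha^\delta\| + \alpha\|w\|\delta.
\end{equation*}
Under $\alpha \sim \delta$ and the bounds from the previous step, every term on the right is $O(\delta^2)$. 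Dividing by $\alpha$ yields $D_R = O(\delta)$, proving the claim for the specific $d$ associated with $p = A^*w$.

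The main subtlety I anticipate is that the naive bound $\Phi(A) \leq \tfrac{\delta^2}{2} + \alpha R(x^\dagger)$, obtained just from $x(A)$ being a Tikhonov minimizer, only gives $\|\hat{B} - A\|^2 = O(\delta)$ and hence the insufficient rate $D_R = O(1)$: the perturbation term $\|\hat{B}-A\|^2\|x^\dagger\|^2$ would dominate. The key is therefore to invoke the source condition \emph{twice} -- once via Burger--Osher applied to $x(A)$ (to get the sharper $O(\delta)$ control on $\|\hat{B}-A\|$) and once via Burger--Osher applied to $\hat{x}_\alpha^\delta = x(\hat{B})$ (for the actual Bregman bound). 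Everything else is a careful but routine combination of the two nested optimality conditions with the triangle inequality on $(\hat{B}-A)$ and $(y^\delta - y^\dagger)$.
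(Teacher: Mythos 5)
Your proposal is correct and follows essentially the same route as the paper: compare the outer functional with the competitor $B=A$, use the classical source-condition (Burger--Osher) residual estimate for the Tikhonov minimizer $x(A)$ to get $\|\hat{B}-A\|=O(\delta)$ and an $O(\delta)$ residual, and then combine the inner optimality at $x^\dagger$ with the subgradient $A^*w$ in the Bregman estimate under $\alpha\sim\delta$. The only cosmetic difference is in the last step: you complete the square in $\|\hat{B}\hat{x}_\alpha^\delta-y^\delta\|$ and invoke coercivity to bound $\|\hat{x}_\alpha^\delta\|$, whereas the paper simply drops that quadratic term and bounds $-\langle w, A\hat{x}_\alpha^\delta-y^\dagger\rangle$ by $\|w\|\,\|A\hat{x}_\alpha^\delta-y^\dagger\|=O(\delta)$ using the same outer comparison.
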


The proof can be found in the appendix \ref{app_proof_convergencebeta}.

%
%

\section{Numerical Computations} \label{ch_numeric}

\textbf{Aim.}
We want to see whether there is a similarity between ADP and DIP also on the numerical side.
The ADP approach is based on the idea of using a LISTA network in a DIP method. Usually LISTA architectures contain round about ten layers, but ADP is motivated with a network of infinite depth (see section \ref{ch_prelim}). To derive the ADP model, the output of this infinite network is then replaced by the solution of a minimization problem. So the question arises, whether numerically computed ADP solutions of an inverse problem are yet similar to solutions obtained via a DIP with LISTA architecture.

In this section, we present algorithms for the computation of ADP solutions and we compare them with DIP solutions. In doing so, the focus is not on the performance of the methods (in comparison to other state-of-the-art reconstruction algorithms) but on the similarity of the different solutions.

\noindent \textbf{Methods.}
From Theorem \ref{theo_equivalence}, we know that the ADP problem is equivalent to an Ivanov problem. This creates a possibility to compute ADP solutions easily, fast and almost exactly (we call this method ADP Ivanov). In contrast to that, it is more difficult to realize a LISTA architecture with infinite depth. But there are at least two possibilities to simulate such a network.

The first idea (Algorithm \ref{algo_lista_infty}: DIP LISTA $L=\infty$) is to begin with a network $\varphi_B$ of ten layers and to increase the network depth during the training process of the DIP. This is done implicitly with a simple trick. In each training step, the network's input is set to be the network's output of the previous step \cite[Appendix 3]{otero2020}. So the original input will pass through more and more layers and in each step the last ten layers are optimized (via backpropagation).

\begin{algorithm}[h]
\SetAlgoLined 
 initialize $B_0$, $z_0$ (e.g.\ $B_0 = A$, $z_0 = \text{random noise}$)\;
 \For{$k=0,1,...$}{
  $z_{k+1} = \varphi_{B_k}(z_k)$\;
  $\mathrm{loss}_k = \frac{1}{2}\|A \varphi_{B_k}(z_k) - y^\delta\|^2$\;
  $B_{k+1} =\mathrm{update}(\nabla_{B_k} \mathrm{loss}_k)$\;
 }
 \Return $z_k$
 \caption{DIP LISTA L=$\infty$}
 \label{algo_lista_infty}
\end{algorithm}

\begin{algorithm}[h]
\SetAlgoLined 
 initialize $B_0$ (e.g.\ $B_0 = A$)\;
 \For{$k=0,1,...$}{
  1. Calculate $x(B_k)$ with fixed point iteration\\
  2. IFT provides: $\nabla_{B_k} x(B_k)$\;
  3. Update:\\
  $\mathrm{loss}_k = \frac{1}{2}\|A x(B_k) - y^\delta\|^2$\;
  $B_{k+1} = \mathrm{update}(\nabla_{B_k} \mathrm{loss}_k) $\; 
 }
 \Return $x(B_k)$
 \caption{ADP IFT}
 \label{algo_ift}
\end{algorithm}

The second idea (Algorithm \ref{algo_ift}: ADP IFT) is to 
compute $x(B)$ from \eqref{eq_ADP} with a classical algorithm like ISTA. After that, one can compute the gradient of $x(B)$ w.r.t.\ $B$ (see the proof of \cite[Lemma 4.1]{otero2020}) via the implicit function theorem (IFT). Thus, backpropagation through a big amount of layers is avoided.

For the standard DIP approach, we use a LISTA-like architecture of ten Layers (DIP LISTA $L=10$) and optimize the weights via backpropagation. So, in total we compare four different methods (ADP Ivanov, ADP IFT, DIP LISTA $L=\infty$, DIP LISTA, $L=10$). Since solving the Ivanov problem results in the exact ADP solution, we use this as a reference for the other three methods (for which we don't have convergence guarantees).

In all methods we use the elastic net functional \cite{zou_hastie2005} $R(x) = \alpha_1 \|x\|_1 + \frac{\alpha_2}{2} \|x\|^2$ 
as a penalty term. So there is one parameter for $\ell^1$-regularization (leads to sparsity) and one parameter for $\ell^2$-regularization (leads to stability and smoothness). In the LISTA-architecture, this is realized by substracting the gradient of the $\ell^2$-term before applying the activation function.

\noindent \textbf{Setting.}
We consider two different artificial inverse problems (inversion of the integration operator and a deconvolution)
on $L^2(I)$ for an interval $I \subset \R$. The forward operators are
\begin{equation}
(A_{1} x)(t) = \int_0^t x(s) \, \mathrm{d}s \quad \text{ and } \quad A_{2}x = g * x, \label{eq_forwardoperators}
\end{equation}
$g$ being a Gaussian function. Both of them lead to ill-posed inverse problems. We chose three different ground truth functions and created data by applying the forward operators and adding normally distributed random noise.
This leads to six examples in total, which is enough for some basic observations. 
Figure \ref{fig_numericways_int} shows the reconstructions corresponding to the integration operator $A_1$. The three rows contain the three different ground truth functions and each column contains a different method. For comparison, the actual ADP solution (ADP Ivanov) and the ground truth is displayed in every plot.
Since we are only interested in finding similarities and disparities between the solutions of the different methods, the choice of the regularization parameters plays a minor role. So, we took the same values $\alpha_1$, $\alpha_2$ for each method and simply chose them a posteriori for each example to minimize the $L^2$-error between reconstructions and ground truth. Figure \ref{fig_numericways_conv} shows the analogous results for the deconvolution problem (forward operator $A_2$).

\begin{figure}[t!]
\centering
\includegraphics[width=0.32\textwidth]{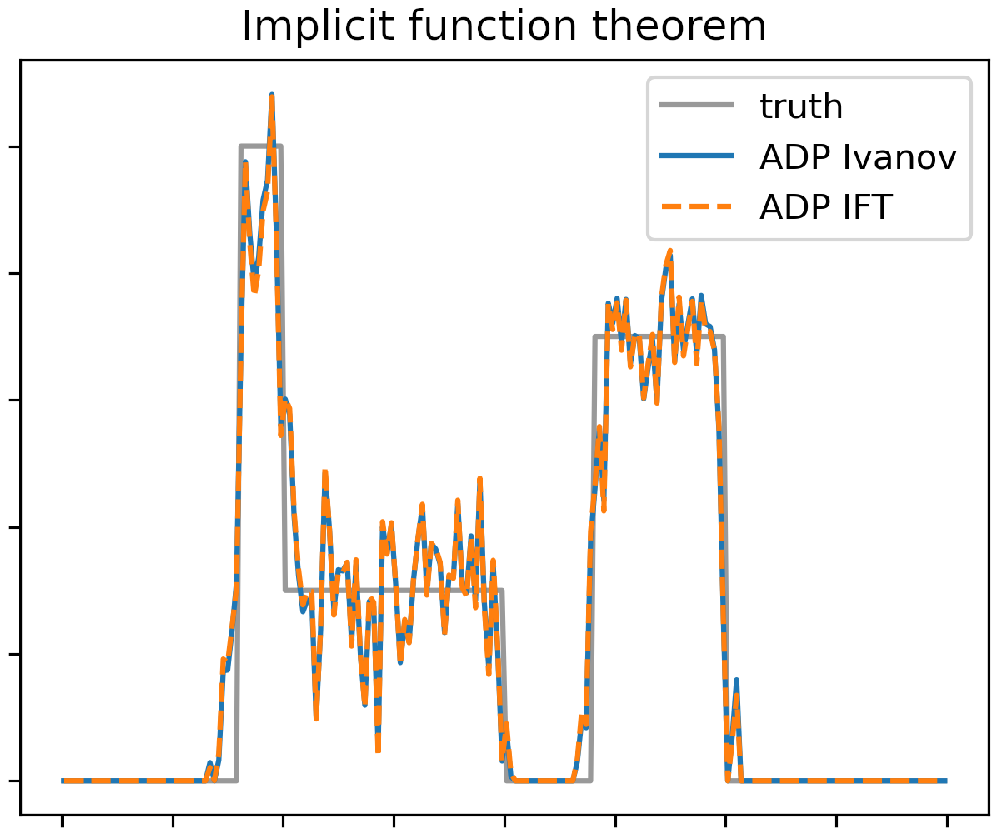}
\includegraphics[width=0.32\textwidth]{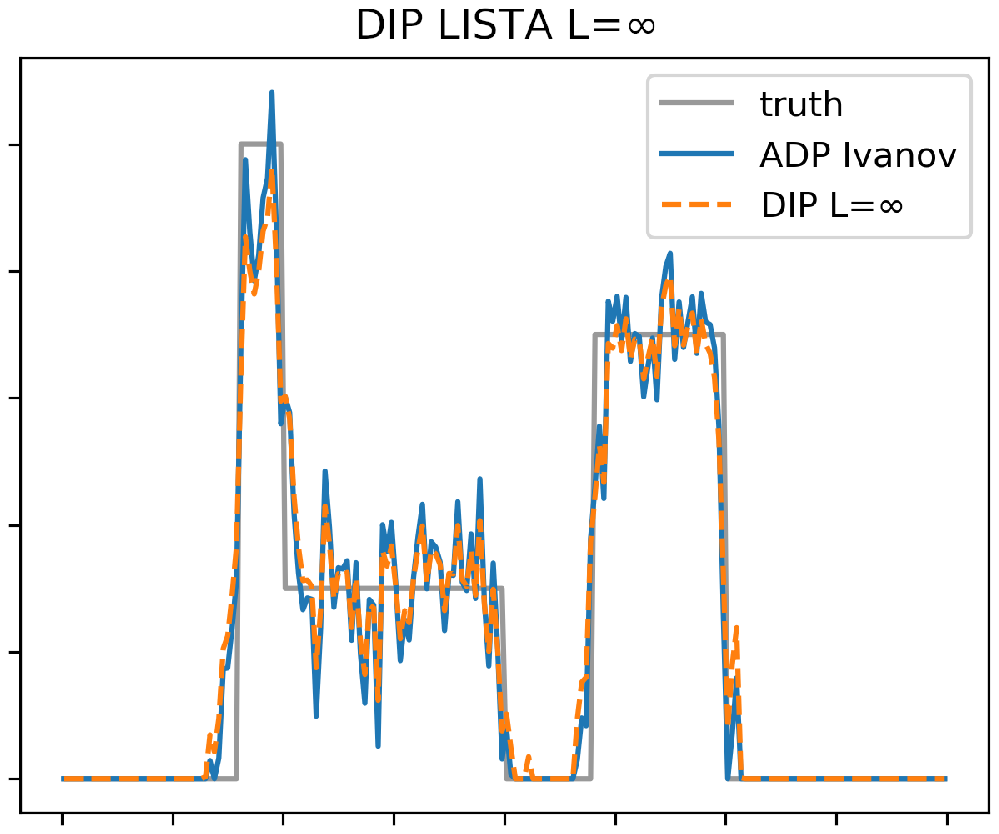}
\includegraphics[width=0.32\textwidth]{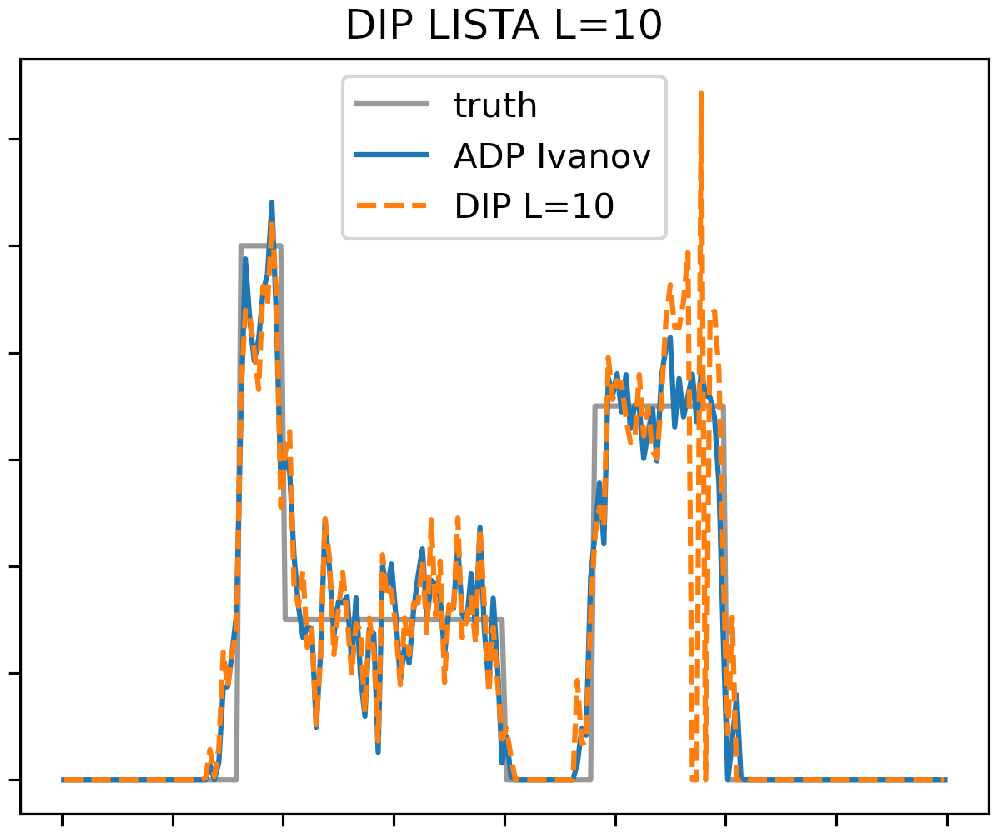}

\includegraphics[width=0.32\textwidth]{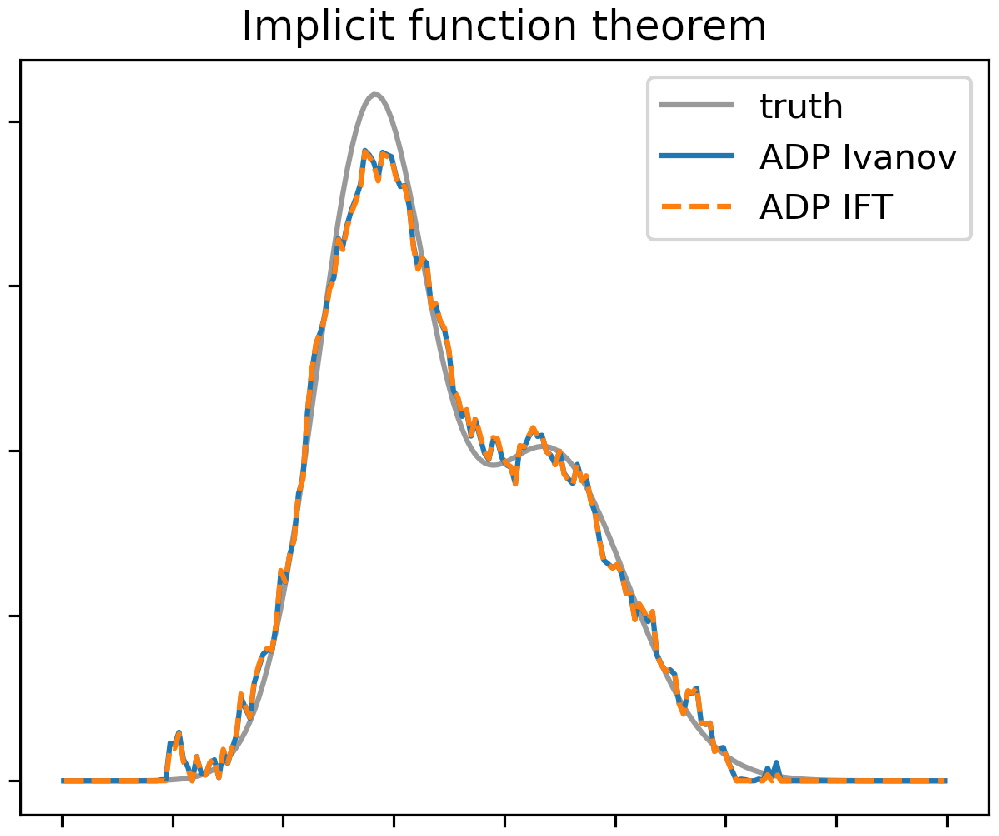}
\includegraphics[width=0.32\textwidth]{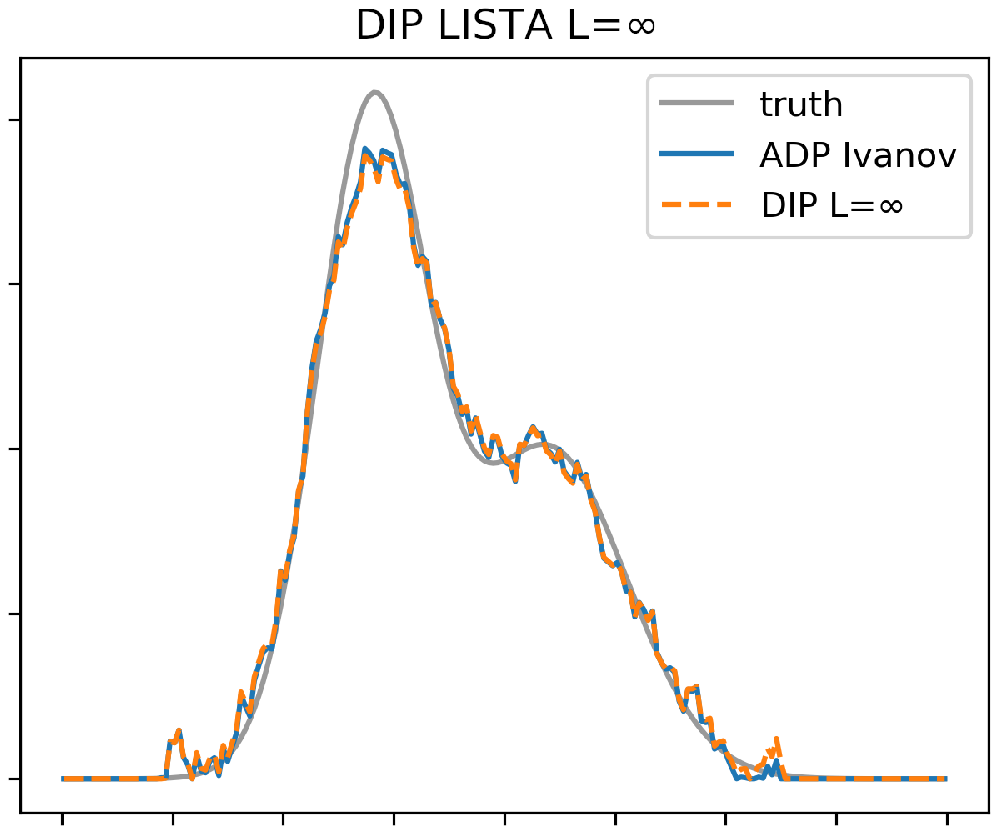}
\includegraphics[width=0.32\textwidth]{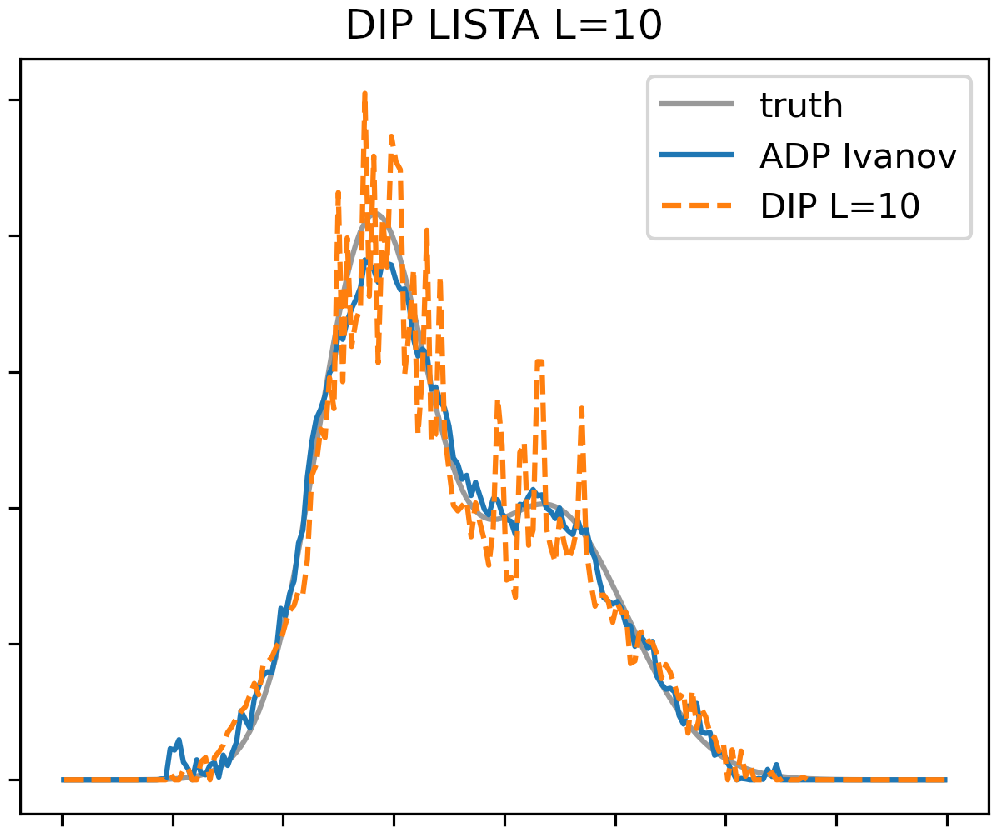}

\includegraphics[width=0.32\textwidth]{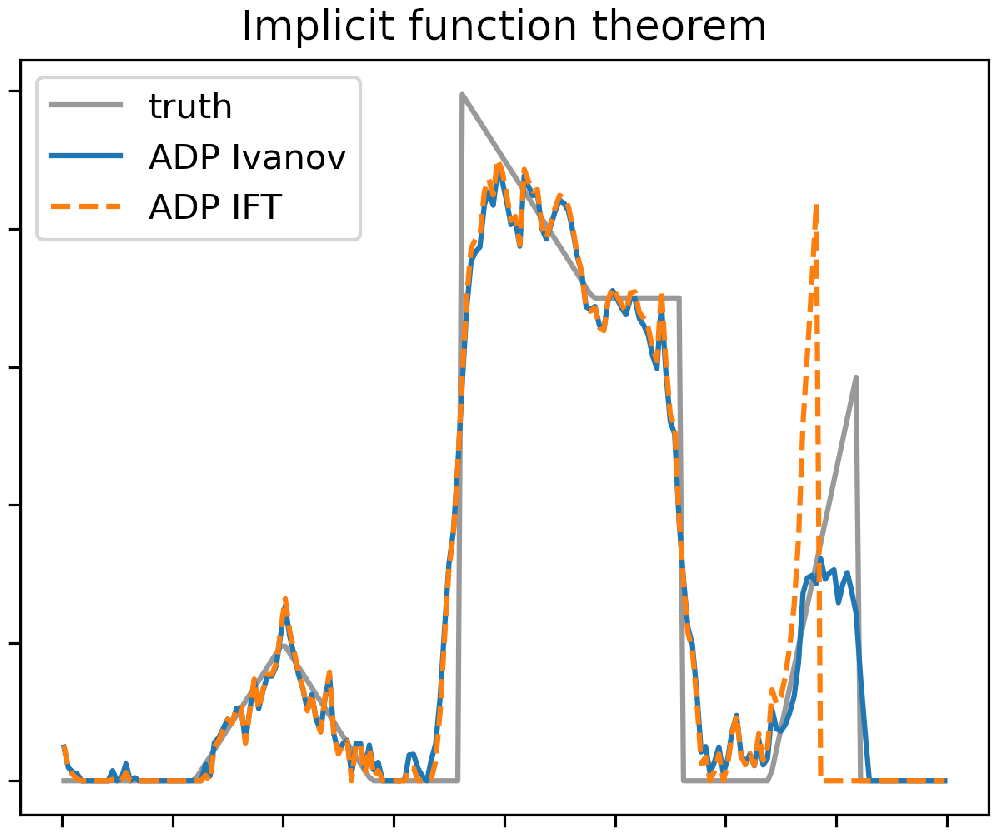}
\includegraphics[width=0.32\textwidth]{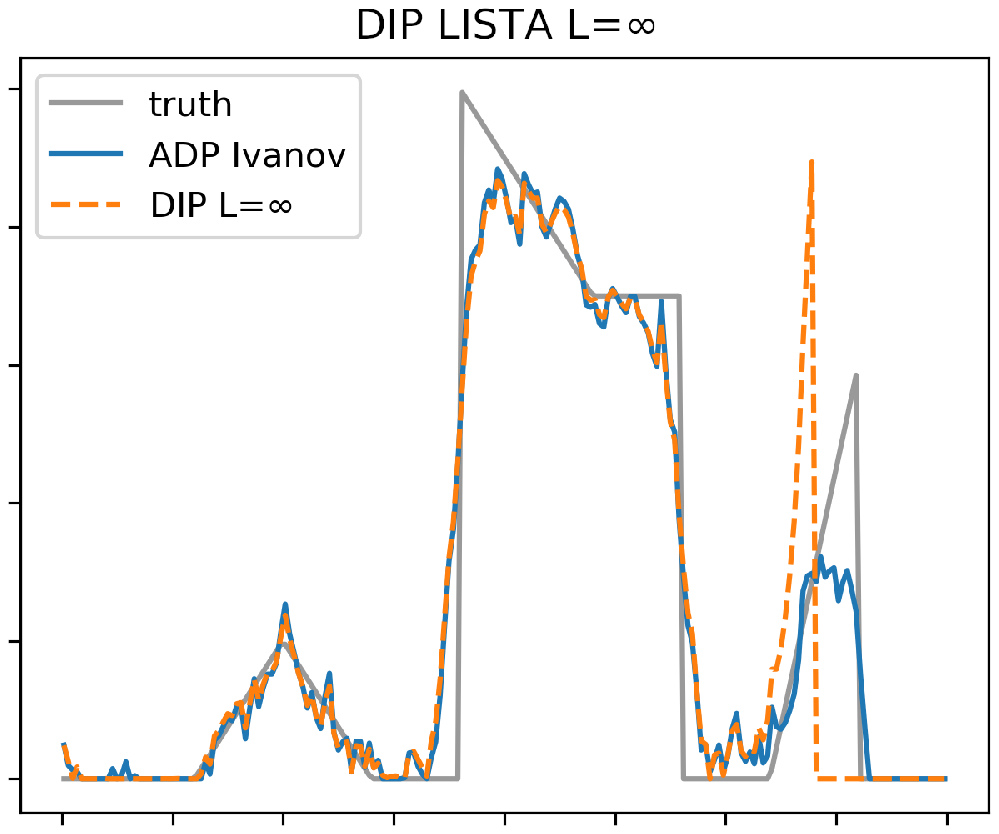}
\includegraphics[width=0.32\textwidth]{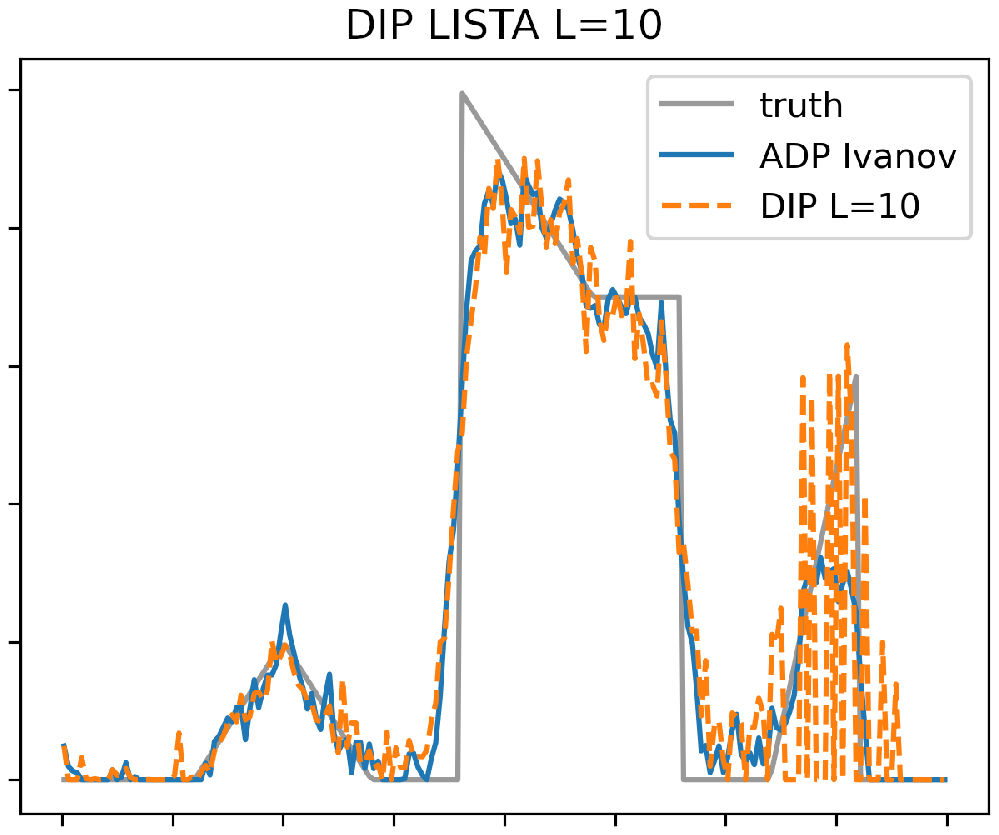}
\caption{Computation of ADP and DIP reconstructions via the IFT, via a DIP with LISTA network ($L=\infty$ and $L=10$) and via the equivalent Ivanov problem. The forward operator is $A_1$ (integration). The given data has PSNR=40 due to additiv Gaussian noise. The regularization parameters $\alpha_1$, $\alpha_2$ are chosen for each example (row) separately but are the same for each method (column).}
\label{fig_numericways_int}
\end{figure}

\noindent \textbf{Observations.}
From these experiments, we can make the following observations.
There is a significant difference between using $L=10$ or $L=\infty$ layers in a LISTA network. With an infinite number of layers, the reconstructions are looking more realistic.
The results of the DIP LISTA L=$\infty$ (Algorithm \ref{algo_lista_infty}) method and of the IFT method (Algorithm \ref{algo_ift}) are always looking quite similar. This was expected because both of them simulate an infinitely deep LISTA network. Differences are probably due to the different ways the gradients are computed or due to to slow convergence of the methods.

\begin{figure}[t!]
\centering
\includegraphics[width=0.32\textwidth]{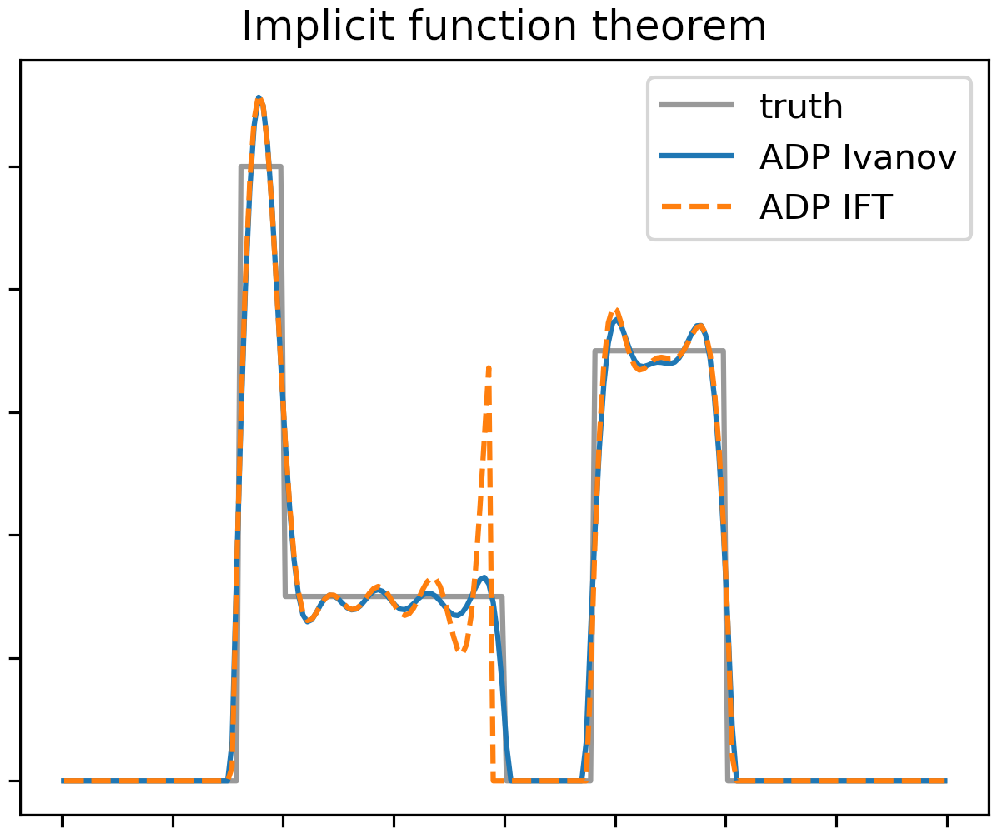}
\includegraphics[width=0.32\textwidth]{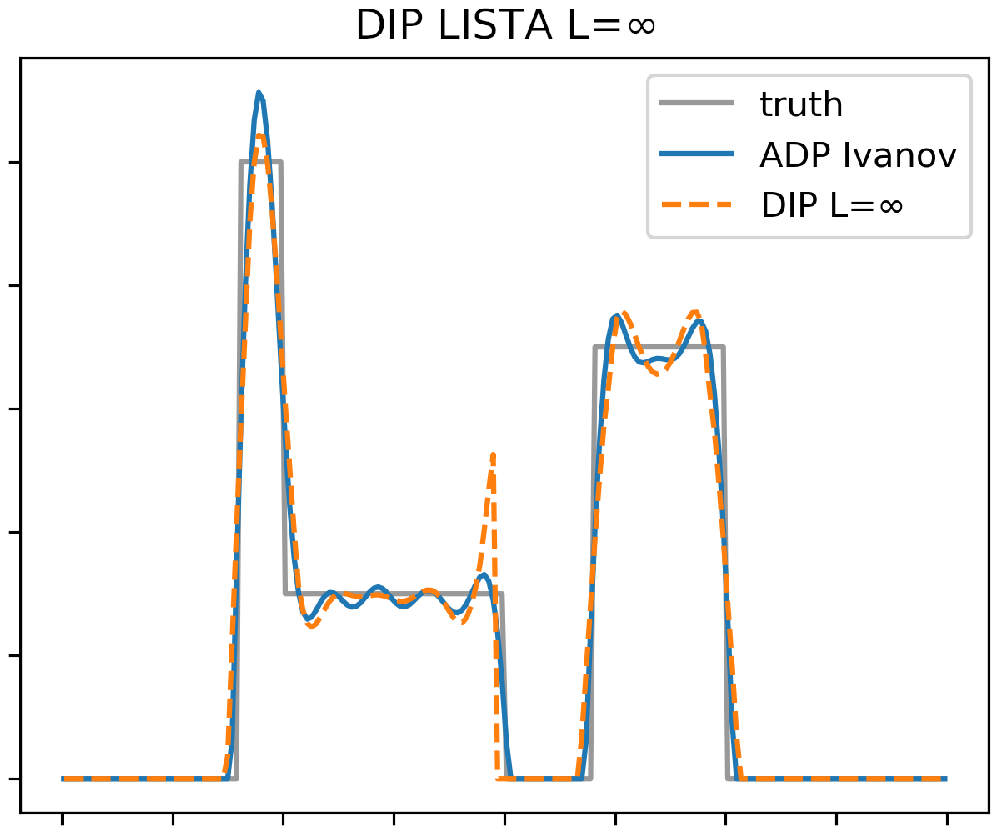}
\includegraphics[width=0.32\textwidth]{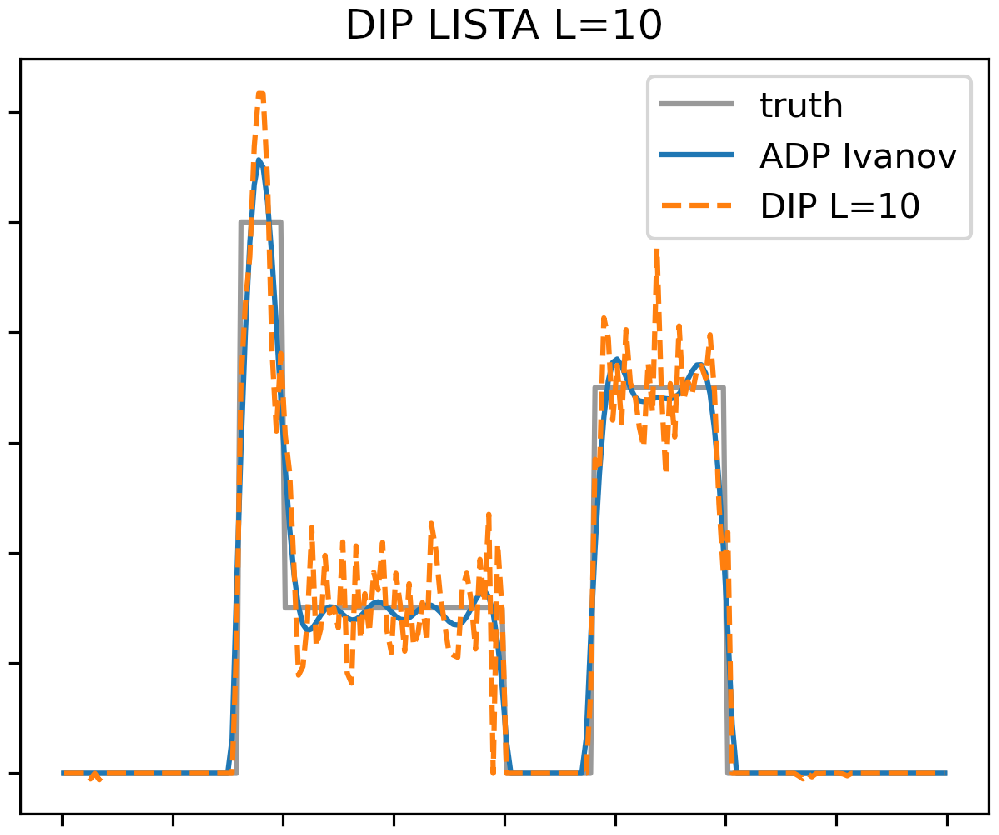}

\includegraphics[width=0.32\textwidth]{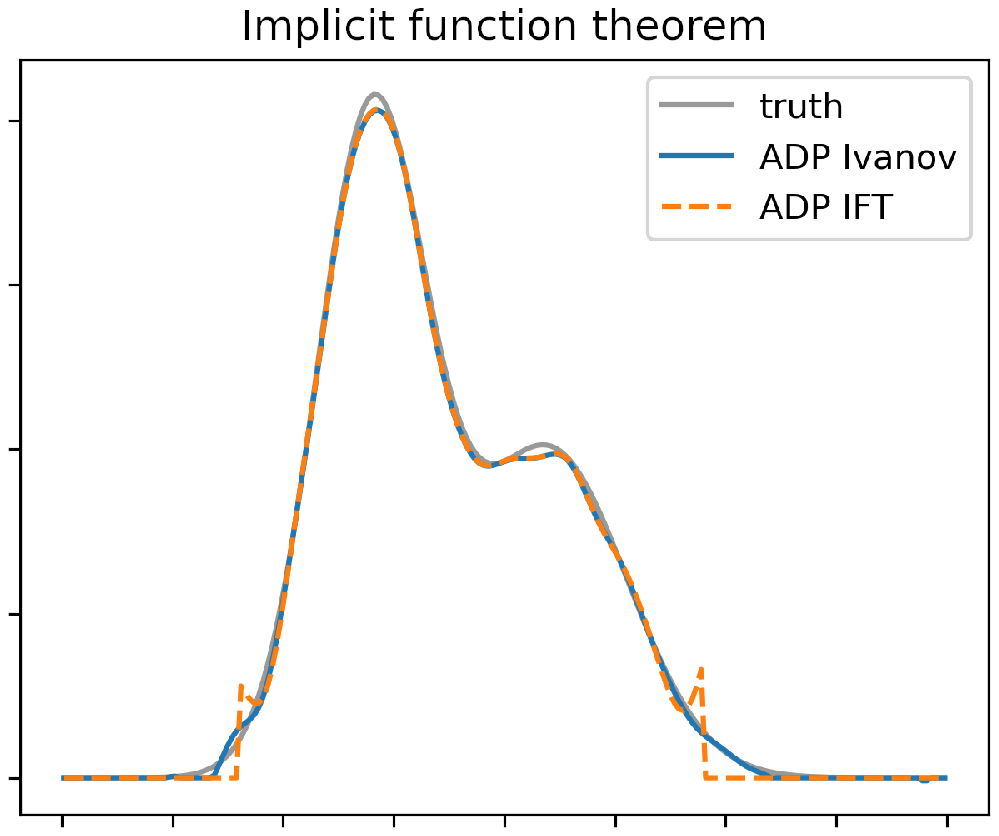}
\includegraphics[width=0.32\textwidth]{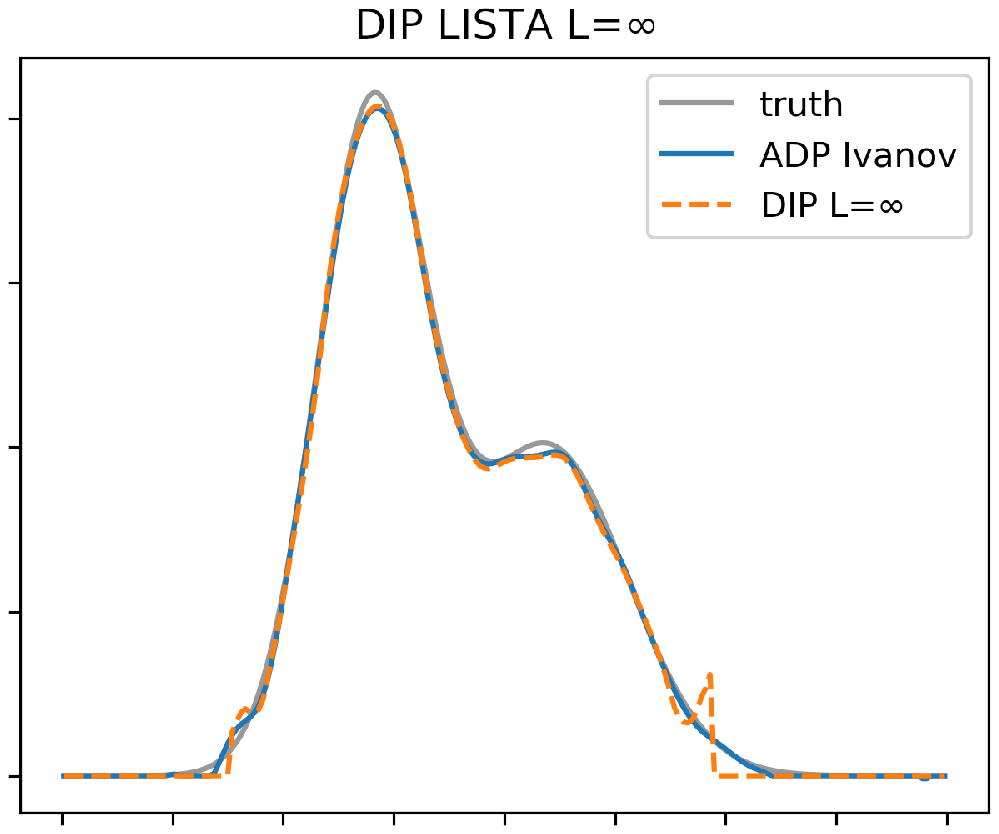}
\includegraphics[width=0.32\textwidth]{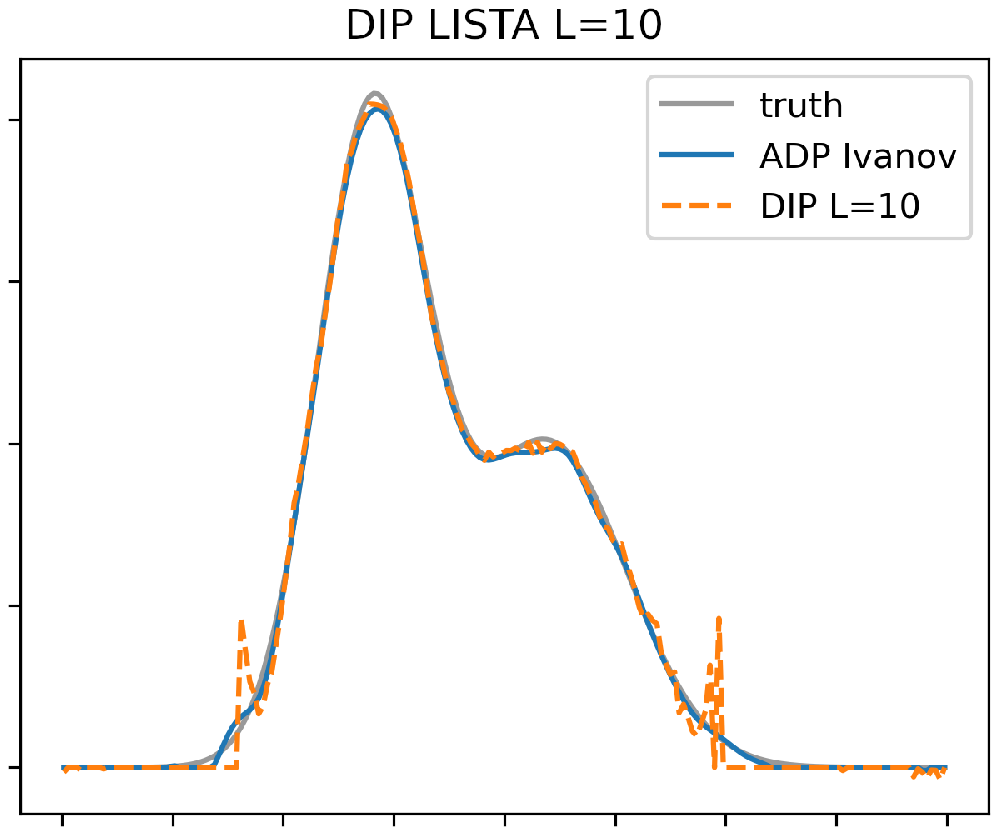}

\includegraphics[width=0.32\textwidth]{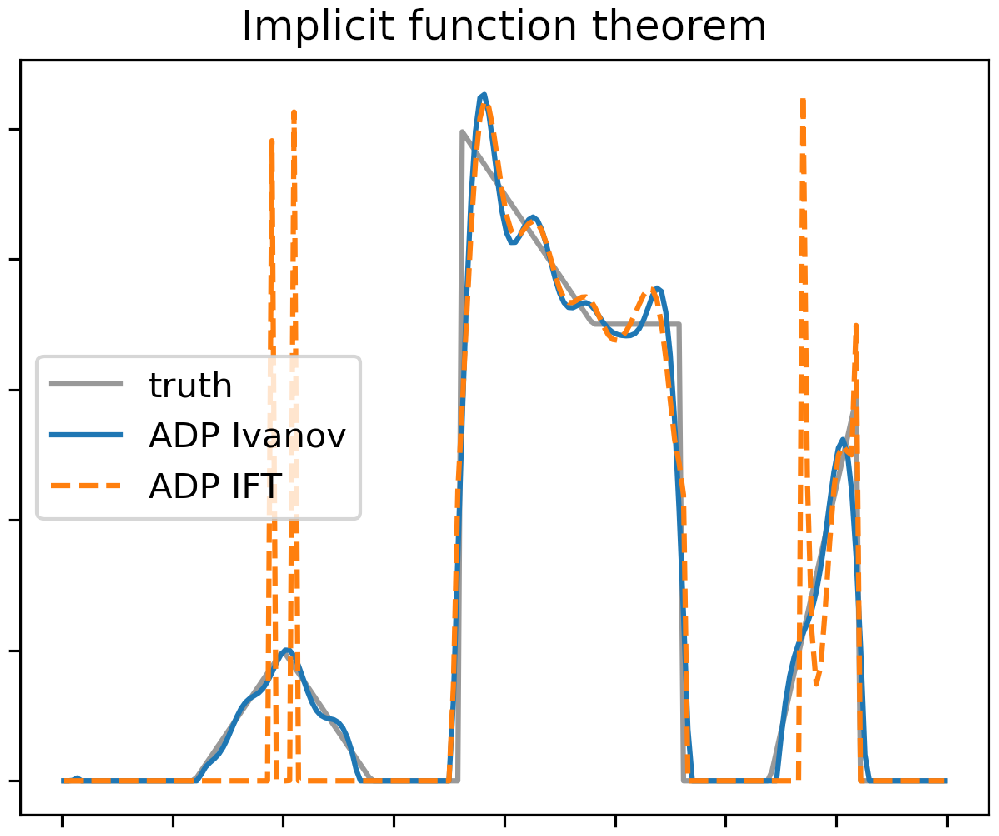}
\includegraphics[width=0.32\textwidth]{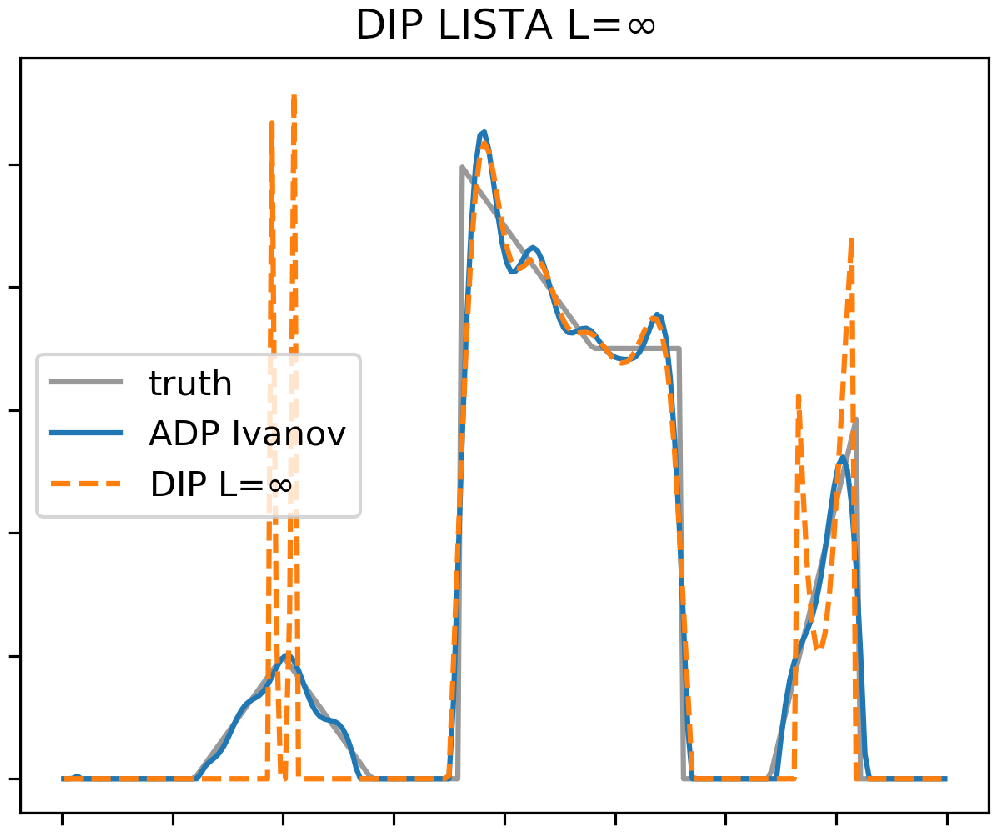}
\includegraphics[width=0.32\textwidth]{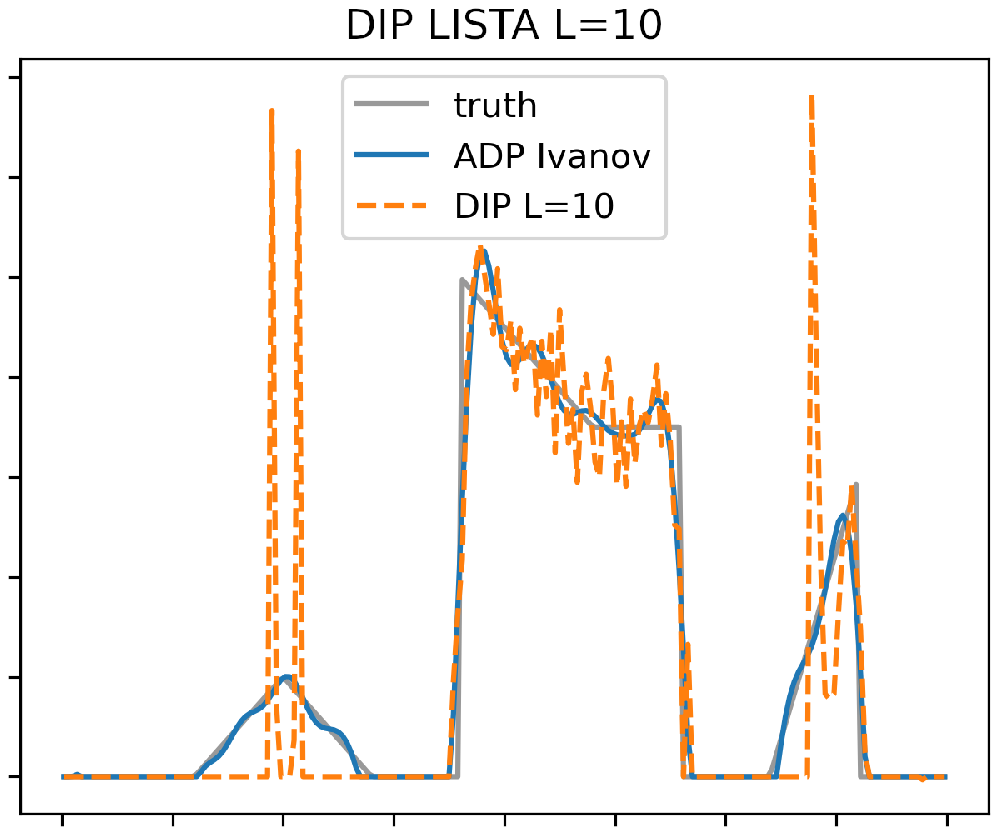}
\caption{Computation of ADP and DIP reconstructions via the IFT, via a DIP with LISTA network ($L=\infty$ and $L=10$) and via the equivalent Ivanov problem. The forward operator is $A_2$ (convolution). The given data has PSNR=45 due to additiv Gaussian noise. The regularization parameters $\alpha_1$, $\alpha_2$ are chosen for each example (row) separately but are the same for each method (column).}
\label{fig_numericways_conv}
\end{figure}

In most of the cases, the reconstructions of these both methods are looking quite similar to the actual ADP solution. But sometimes they contain artifacts (e.g.\ the peaks in figure \ref{fig_numericways_conv}, third row). It seems that there are some spots which are hard to reconstruct for the DIP methods and others are rather simple.
Besides, the ADP problem \eqref{eq_ADP} is not a convex minimization problem w.r.t.\ $B$. So there is no guarantee for the methods which do gradient descent (DIP LISTA $L=\infty$ and the IFT method) to converge towards the global minimizer. Figure \ref{fig_initialvalue} shows that the reconstructions of these methods are indeed dependent on the initial value $B_0$ of the algorithms. In contrast to that, the Ivanov problem from Theorem \ref{theo_equivalence} is convex (with the elastic net penalty term $R$). That's probably why the actual ADP solutions are the only ones which never contain strange artifacts and the only ones that are always quite good reconstructions of the ground truth.

\begin{figure}[ht]
    \centering
    \includegraphics[width=0.32\textwidth]{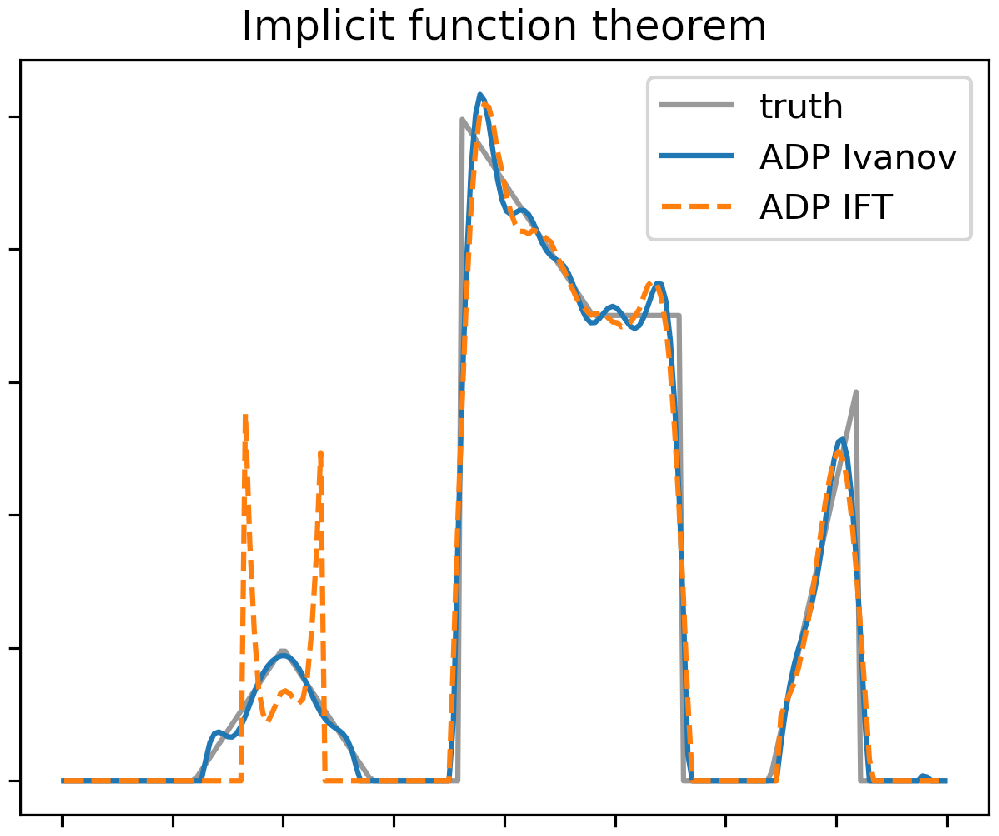}
    \includegraphics[width=0.32\textwidth]{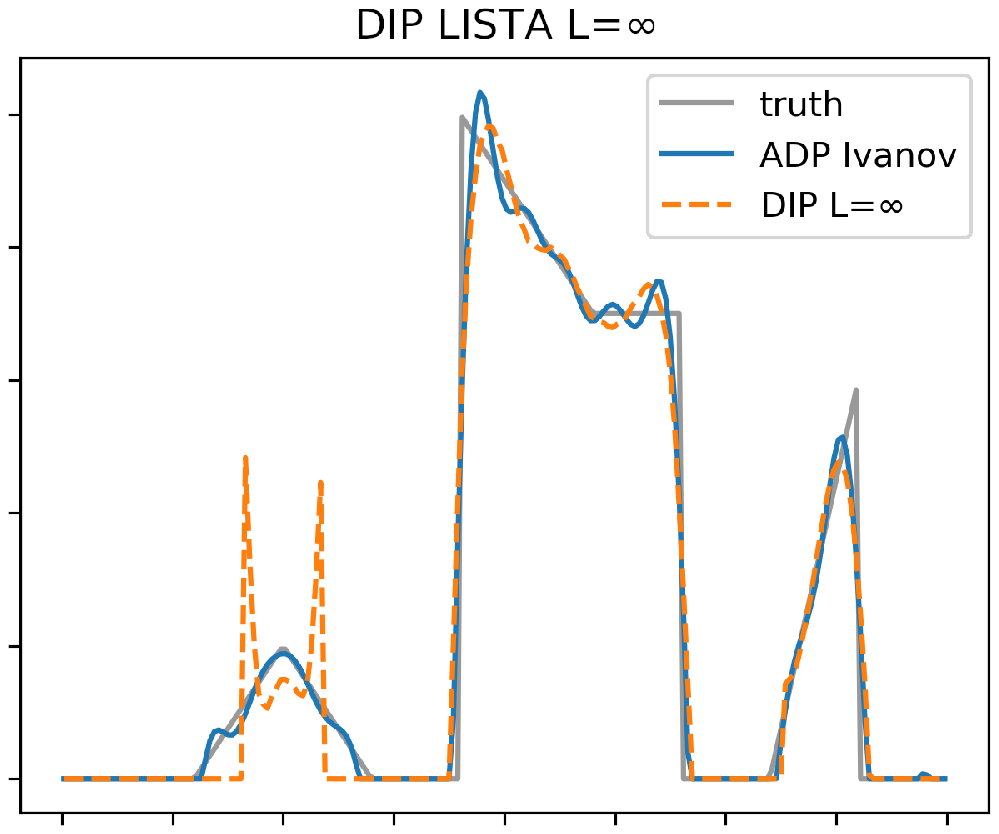}
    \caption{The same setting and methods as in figure \ref{fig_numericways_conv} but with different initial values $B_0$.}
    \label{fig_initialvalue}
\end{figure}

The easiest possibility to slightly improve the reconstrution quality is to apply early stopping. In doing so, the most severe artifacts in the reconstructions can be diminished. This case corresponds to the ADP-$\beta$ approach (see section~\ref{ch_earlystopping}), whose additional convex term $\beta \|B-A\|^2$ is a numerical advantage because it stabilizes the gradient descent for finding the minimizer. Indeed, adding the gradient of the $\beta$-term to the update in Algorithm \ref{algo_ift} (ADP IFT) can also diminish the severe artifacts. But we do not include experimental results about this, since the most interesting part is the comparison with the equivalent Ivanov problem, which doesn't exist for ADP-$\beta$.

The main conclusion is the numerical verification of the derivation of the ADP problem from the DIP approach. It is possible to use the theoretical analysis of the ADP problem for interpretations of the DIP approach because of the similarity between the reconstructions from the different numerical methods. However, the examples from figure~\ref{fig_initialvalue} illustrate that DIP can be formulated as a minimization problem \eqref{eq_DIP} but a numerical computed solution is not automatically a global minimizer of this problem. If early stopping is used, it is probably not even a local minimizer. Hence, there is a significant difference between the theoretical definition and the practical implementation of DIP.

%
%

\section{Conclusion}

ADP and ADP-$\beta$ were introduced as methods for solving ill-posed inverse problems in a typical Hilbert space setting (Assumption \ref{def_assumptions}). Both of them are motivated by considering DIP with a LISTA-like architecture. The main result is an equivalence of ADP to the classical method of Ivanov regularization.

We have proven existence, stability and convergence results for both ADP and ADP-$\beta$. The obtained regularization properties are comparable to the ones of classical methods like Tikhonov's. In principal, these results can be transferred to DIP with LISTA-like networks. But due to non-convexity of the DIP minimization problem, numerically computed DIP solutions can differ significantly from exact ADP solutions, although they are similar in many cases. We conclude that theoretical analyses of the DIP approach should consider the whole optimization process and not only the properties of the minimizer.

One very important part is the early stopping of the DIP optimization process. In the ADP setting, we incorporated this strategy with an additional penalty term, which resulted in the ADP-$\beta$ model. The effect of this regularization can be seen by comparing the convergence theorems of ADP and ADP-$\beta$. Theorem \ref{theo_convergencebeta} provides a parameter choice rule ($\alpha \sim \delta$) for ADP-$\beta$, which is a big advantage over ADP.

A generalization of the ADP regularization results to DIP with general convolutional neural networks (CNNs) would be very desirable. The LISTA architecture was suitable because of its similarity to proximal splitting algorithms and the possibility to interpret the output as a solution of a variational problem. Finding similar connections for general CNNs is harder. However in \cite{Celledoni_2021}, CNNs are used to model proximal mappings and in \cite{papyan2018}, CNNs are interpreted as algorithms for sparse coding. Besides, \cite{combettes2020} asserts that most common activation functions are in fact proximal mappings and they establish a theory for characterizing the fixed point sets of neural networks as solutions of variational inequalities. These directions could provide ideas for possible future extensions.

%
%

\section*{Acknowledgements}
I want to thank Dr.\ Daniel Otero Baguer, Prof.\ Peter Maaß, Dr.\ Tobias Kluth and many more colleagues from the University of Bremen and Dr.\ Yury Korolev from the University of Cambridge for helpful advice and feedback.

%
%

\appendix

\section{Proofs of theoretical results}


\subsection{Theorem \ref{theo_xBstetig}} \label{app_proof_xBstetig}

Continuity of $B \mapsto x(B)$.

\begin{proof}
Let $(B_k) \subset L(X,Y)$ be a sequence of operators with $B_k \to B \in L(X,Y)$. At first, we mention that
the sequence $(x(B_k))$ is bounded because
\begin{equation}
\begin{split}
    \alpha R(x(B_k)) \leqslant \frac{1}{2} \|B_k x(B_k) - y^\delta\|^2 + \alpha R(x(B_k)) 
    \leqslant \frac{1}{2} \|B_k x(B) - y^\delta\|^2 + \alpha R(x(B)) 
    \end{split}
\end{equation}
holds. 
Further, we can estimate
\begin{align}
\begin{split}
    & \quad \, \, \frac{1}{2} \|B x(B_k) - y^\delta\|^2 + \alpha R(x(B_k))\\
    &= \frac{1}{2} \|B_k x(B_k) - y^\delta + (B-B_k) \, x(B_k)\|^2 + \alpha R(x(B_k)) \\
    &\leqslant \frac{1}{2} (\|B_k x(B_k)  - y^\delta\| + \|(B-B_k) \,x(B_k)\|)^2 + \alpha R(x(B_k)) \\
    &=  \frac{1}{2} \|B_k x(B_k)  - y^\delta\|^2 + \alpha R(x(B_k))\\
    &\, +  \|(B-B_k) \, x(B_k)\| \mydot \left(\|B_k x(B_k)  - y^\delta\| + \frac{1}{2}\|(B-B_k) \, x(B_k)\|\right).
    \end{split}
\end{align}
Because of the boundedness of $(x(B_k))$ and the convergence $B_k \to B$, the term
\begin{equation}
    \|(B-B_k) \, x(B_k)\| \mydot \left(\|B_k x(B_k)  - y^\delta\| + \frac{1}{2}\|(B-B_k) \, x(B_k)\|\right)
\end{equation}
converges to zero. For the remaining terms, we can estimate
\begin{equation}
    \frac{1}{2} \|B_k x(B_k)  - y^\delta\|^2 + \alpha R(x(B_k))
    \leqslant \frac{1}{2} \|B_k x(B)  - y^\delta\|^2 + \alpha R(x(B)),
\end{equation}
and $\|B_k x(B)  - y^\delta\|^2$ converges to $\|B x(B)  - y^\delta\|^2$.
So $(x(B_k))$ is a minimizing sequence of the strongly convex functional $\frac{1}{2} \|B x  - y^\delta\|^2 + \alpha R(x)$ because $x(B)$ is the minimizer. By \cite[Theorem 1]{LOONEY1977835}, the minimizing sequence converges to the minimizer $x(B)$.
\end{proof}


\subsection{Lemma \ref{lem_existsB_allg}} \label{app_proof_existsB_allg}

First part of the equivalence theorem for ADP to Ivanov problems.

\begin{proof}
Let $\hat{x}, v, y^\delta, \alpha$ and $R$ be given according to the assumptions.
We have to find a linear operator $B$ such that 
\begin{equation}
    -B^*(B \hat{x} - y^\delta) \in \alpha \partial R(\hat{x}).
\end{equation}
holds.
Because of $v \in \partial R(\hat{x})$, we just try to solve the equation
\begin{equation} \label{eq_B_und_v}
- B^*(B \hat{x} - y^\delta) = \alpha v 
\end{equation}
for $B$. If $\hat{x} = 0$, solving would be trivial. Otherwise, we can decompose $v$ into
\begin{equation}
v = \mu \hat{x} + v_\bot \qquad \text{s.t. } \langle v_\bot, \hat{x} \rangle = 0.
\end{equation} 
Accordingly it is $\mu = {\langle v, \hat{x} \rangle}/{\|\hat{x}\|^2}$. With that, we can write the equation from above as
\begin{equation} \label{eq_B_und_v2}
B^*B \hat{x} + \alpha \mu \hat{x} + \alpha v_\bot = B^* y^\delta.
\end{equation}

We consider a linear operator $B \colon X \to Y$ of the form
\begin{equation}
B x = ( \sigma_1 \langle x, \hat{x} \rangle + \sigma_2 \langle x, v_\bot \rangle ) \mydot y^\delta \label{eq_Bsigma12}
\end{equation}
with two coefficients $\sigma_1$ and $\sigma_2$ to be determined later. Then, the adjoint operator is given by
\begin{equation}
B^* y = \langle y, y^\delta \rangle ( \sigma_1 \hat{x} + \sigma_2 v_\bot)
\end{equation}
and it holds
\begin{align}
B^*B \hat{x} &= B^*(( \sigma_1 \|\hat{x} \|^2 ) \mydot y^\delta) = \sigma_1^2 \|\hat{x}\|^2 \|y^\delta \|^2 \hat{x} + \sigma_1 \sigma_2 \|\hat{x}\|^2 \|y^\delta \|^2 v_\bot, \\
B^*y^\delta &= \sigma_1 \|y^\delta\|^2  \hat{x} + \sigma_2 \|y^\delta\|^2 v_\bot.
\end{align}
To fulfill \eqref{eq_B_und_v2}, we have to solve
\begin{equation}
\sigma_1^2 \|\hat{x}\|^2 \|y^\delta \|^2 \hat{x} + \sigma_1 \sigma_2 \|\hat{x}\|^2 \|y^\delta \|^2 v_\bot + \alpha \mu \hat{x} + \alpha v_\bot = \sigma_1 \|y^\delta\|^2  \hat{x} + \sigma_2 \|y^\delta\|^2 v_\bot.
\end{equation}
Because $\hat{x}$ and $v_\bot$ are orthogonal to each other, we get the two equations
\begin{align}
\sigma_1^2 \|\hat{x}\|^2 \|y^\delta \|^2  + \alpha \mu = \sigma_1 \|y^\delta\|^2, \label{eq_sigma1} \\
\sigma_1 \sigma_2 \|\hat{x}\|^2 \|y^\delta \|^2 + \alpha =  \sigma_2 \|y^\delta\|^2.\label{eq_sigma2}
\end{align}
Notice that \eqref{eq_sigma2} and the coefficient $\sigma_2$ could be ignored if $v_\bot = 0$ held.

Equation \eqref{eq_sigma1} can be solved for $\sigma_1$ with a quadratic formula, which leads to
\begin{equation}
\sigma_1 = \frac{1}{2\|\hat{x}\|^2} \pm \sqrt{\frac{1}{4\|\hat{x}\|^4} - \frac{\alpha \mu}{\|\hat{x}\|^2 \| y^\delta \|^2}}.
\end{equation}
Accordingly,
\begin{equation}
\frac{\alpha \mu}{\| y^\delta \|^2} \leqslant \frac{1}{4\|\hat{x}\|^2}
\end{equation}
must hold to get real solutions. We know from above that $\mu = {\langle v, \hat{x} \rangle}/{\|\hat{x}\|^2}$. If we insert this, we will see that this matches exactly the assumptions of the lemma.

Now, equation \eqref{eq_sigma2} has to be solved for $\sigma_2$. Excluding $\sigma_2$ leads to
\begin{equation}
\sigma_2 (\sigma_1 \|\hat{x}\|^2 \|y^\delta \|^2 - \|y^\delta\|^2) + \alpha = 0.
\end{equation}
If the term inside of the parenthesis doesn't equal zero, there will exist a solution $\sigma_2$. If the term equaled zero, equation \eqref{eq_sigma1} would lead to $\mu = 0$. But in this case, we could choose $\sigma_1 = 0$ (the quadratic formula allows two solutions), and then it is no problem to find a solution for $\sigma_2$, too.

By finding solutions for $\sigma_1$ and $\sigma_2$, we showed that the operator $B$ defined in \eqref{eq_Bsigma12} solves equation \eqref{eq_B_und_v}. So the lemma is proved.
\end{proof}


\subsection{Lemma \ref{lem_notexistsB_allg}} \label{app_proof_notexistsB_allg}

Second part of equivalence theorem for ADP to Ivanov problems.

\begin{proof}
Let $\hat{x}, y^\delta, \alpha$ and $R$ be given according to the assumptions. Assume there exists a linear operator $B$ such that 
\begin{equation}
0 \in B^*(B \hat{x} - y^\delta) + \alpha \partial R(\hat{x})
\end{equation}
holds. It follows
\begin{equation}
    v := - \frac{1}{\alpha} B^*(B \hat{x} - y^\delta) \in \partial R(\hat{x}).
\end{equation} 
We can calculate $\alpha \langle v,\hat{x}\rangle =  -\| B \hat{x}\|^2 + \langle y^\delta, B \hat{x} \rangle$.
So according to the assumptions,
\begin{equation}
 -\| B \hat{x}\|^2 + \langle y^\delta, B \hat{x} \rangle  > \frac{\|y^\delta\|^2}{4}
\end{equation}
must hold.
But with Young's inequality, we get
\begin{equation}
-\| B \hat{x}\|^2 + \langle y^\delta, B \hat{x} \rangle \leqslant -\| B \hat{x}\|^2 + \frac{1}{4} \|y^\delta\|^2 + \|B \hat{x}\|^2 = \frac{\|y^\delta\|^2}{4}.
\end{equation}
Obviously, this is a contradiction. That's why such an operator $B$ can't exist.
\end{proof}


\subsection{Lemma \ref{lem_parameters}} \label{app_proof_parameters}

Relation between the ADP parameter and the Tikhonov parameter of the equivalent problem.

\begin{proof}
Let $\hat{x}$ be the solution of the ADP problem \eqref{eq_adp_alpha}. Because of the equivalence to the Tikhonov method, $\hat{x}$ is the solution of \eqref{eq_tik_alpha} in the same time. Besides, $x(A)$ is the Tikhonov solution w.r.t.\ the parameter $\alpha_{\text{ADP}}$ of the inverse problem. Because of the minimizing properties of $\hat{x}$ and $x(A)$,
\begin{align}
    \frac{1}{2} \| A \hat{x} - y^\delta \|^2 &\leqslant \frac{1}{2} \| A x(A) - y^\delta \|^2, \\
    \frac{1}{2} \| A x(A) - y^\delta \|^2 + \frac{\alpha_{\text{ADP}}}{2} \|x(A)\|^2 &\leqslant \frac{1}{2} \| A \hat{x} - y^\delta \|^2 + \frac{\alpha_{\text{ADP}}}{2} \|\hat{x}\|^2
\end{align}
holds. It follows $\|x(A)\|^2 \leqslant \|\hat{x}\|^2$. Both $x(A)$ and $\hat{x}$ are Tikhonov solutions of the same problem (only with different parameters). So $\tilde{\alpha} \leqslant \alpha_{\text{ADP}}$ must hold because the norm of $\hat{x}$ is greater (or equal) than the norm of $x(A)$.

Now, we assume $\tilde{\alpha} = \alpha_{\text{ADP}}>0$. By Remark \ref{rem_l2_equivalence}, the problems
\begin{align}
    &\min_{x \in X} \frac{1}{2}\|Ax - y^\delta\|^2 + \frac{\alpha_{\text{ADP}}}{2} \|x\|^2,\\
    &\min_{x \in X} \frac{1}{2}\|Ax - y^\delta\|^2 \quad \text{s.t. } \|x\|^2 \leqslant \frac{\|y^\delta\|^2}{4\alpha_{\text{ADP}}} \label{eq_lemparam_ivanov}
\end{align}
are equivalent. The solution $\hat{x}$ fulfills
\begin{equation} \label{eq_lemparam_tikhonov}
    (A^* A + \alpha_{\text{ADP}} \mydot \mathrm{Id}) \hat{x} = A^* y^\delta
\end{equation}
and we assume $A^* y^\delta \neq 0$. Then, $\hat{x}$ must fulfill the side constraint of \eqref{eq_lemparam_ivanov} with equality, otherwise $A^*A \hat{x} = A^* y^\delta$ would hold, which is a contradiction. Accordingly we get $\alpha_{\text{ADP}}\|\hat{x}\|^2 = \|y^\delta\|^2/4$ and by computing the inner product of \eqref{eq_lemparam_tikhonov} with $\hat{x}$, it follows
\begin{equation} \label{eq_singularcondition}
     \|A \hat{x}\|^2 + \frac{\|y^\delta\|^2}{4} = \|A \hat{x}\|^2 + \alpha_{\text{ADP}} \|\hat{x}\|^2 = \langle A \hat{x}, y^\delta \rangle.
\end{equation}
If we then apply the Cauchy-Schwarz and Young's inequality, we get
\begin{equation}
    \langle A \hat{x}, y^\delta \rangle \leqslant \|A \hat{x}\| \mydot \|y^\delta\| \leqslant \|A \hat{x}\|^2 + \frac{\|y^\delta\|^2}{4},
\end{equation}
which means these inequalities must in fact hold as equalities. Therefore, $A \hat{x}$ and $y^\delta$ must be linear dependent (Cauchy-Schwarz) and $2\|A \hat{x}\| = \|y^\delta\|$ must hold (Young).
It follows
\begin{equation}
    A \hat{x} = \frac{1}{2} y^\delta.
\end{equation}
We can plug this into \eqref{eq_lemparam_tikhonov} and get
\begin{equation}
   \alpha_{\text{ADP}} \hat{x} = \frac{1}{2} A^* y^\delta.
\end{equation}
Accordingly $A A^* y^\delta =  \alpha_{\text{ADP}}  y^\delta$ holds, so $y^\delta$ is a singular vector of $A$.
\end{proof}


\subsection{Theorem \ref{theo_stabilityADP}} \label{app_proof_stabilityADP}

Stability of the ADP approach.

\begin{proof}
We follow some of the ideas of the proofs of \cite[Theorem 2.1]{Engl_1989} and \cite[Theorem 2]{Seidman_1989}.

Let $x_k$ and $\hat{x}$ be unique solutions of \eqref{eq_altFormtik} for $y^\delta=y_k, \hat{y}$ with $y_k \to \hat{y}$. The sequence $(x_k)$ is bounded, so there exists a weakly convergent subsequence $(x_{k_l})$, $x_{k_l} \rightharpoonup x_\infty$. For arbitrary $\varepsilon>0$ and $x \in X$ with $\|x\|^2 \leqslant \|\hat{y}\|^2 \mydot (4 \alpha)^{-1} - \varepsilon$, it holds
\begin{equation}
    \| A x_\infty - \hat{y}\| \leqslant \liminf_{l \to \infty} \|A x_{k_l} - y_{k_l}\| \leqslant \lim_{l \to \infty} \|A x - y_{k_l}\| = \|Ax - \hat{y}\|
\end{equation}
because $x_{k_l}$ minimizes the ADP problem w.r.t.\ $y_{k_l}$ and $x$ fulfills the side constraint for $l$ big enough.
With $\varepsilon \to 0$ and because of the uniqueness of the solutions, we obtain $x_\infty = \hat{x}$.
Arguing with a subsequence of a subsequence leads to the weak convergence $x_k \rightharpoonup \hat{x}$ of the whole sequence.

According to the assumptions, it holds $\|x_k\|^2 = \|\hat{y_k}\|^2 \mydot (4 \alpha)^{-1}$. So $y_k \to \hat{y}$ implies ${\|x_k\| \to \|\hat{x}\|}$ and together with the weak convergence, we finally obtain $x_k \to \hat{x}$.
\end{proof}


\subsection{Theorem \ref{theo_stabilitybeta}} \label{app_proof_stabilitybeta}

Stability of the ADP-$\beta$ approach.

\begin{proof} First, we note that the sequence $(g_k)$ is bounded in $W^{1,2}(\Omega)$. Hence, there exists at least one weakly convergent subsequence. For any subsequence with $g_k \rightharpoonup \hat{g}$, it holds
\begin{align}
    T(f, x_{g_k}) - y_k \, &\to \, T(f, x_{\hat{g}}) - \hat{y}
\end{align}
because of the arguments from Remark \ref{rem_betaexistence}.
For arbitrary $g \in W^{1,2}(\Omega)$,
\begin{equation}
\begin{split} \label{eq_stab_minimizer}
    &\quad \, \frac{1}{2}\| T(f, x_{\hat{g}}) - \hat{y}\|_{L^2}^2 + \beta \|\hat{g}-f\|_{W^{1,2}}^2 \\
    &\leqslant \liminf_{k \to \infty} \frac{1}{2}\| T(f,x_{g_k}) - y_k\|_{L^2}^2 + \beta \|g_k-f\|_{W^{1,2}}^2 \\
    &\leqslant \lim_{k \to \infty} \frac{1}{2}\| T(f, x_{g}) - y_k\|_{L^2}^2 + \beta \|g-f\|_{W^{1,2}}^2 \\
    &=  \frac{1}{2}\| T(f, x_{g}) - \hat{y}\|_{L^2}^2 + \beta \|g-f\|_{W^{1,2}}^2
\end{split}
\end{equation}
holds because of the minimizing property of $g_k$ w.r.t.\ $y_k$. Hence, $\hat{g}$ is a minimizer of \eqref{eq_ADP-param} w.r.t\ $\hat{y}$. 
If we choose $g=\hat{g}$, the first and the last line in \eqref{eq_stab_minimizer} coincide, and we get
\begin{equation}
    \lim_{k \to \infty} \frac{1}{2}\| T(f,x_{g_k}) - y_k\|_{L^2}^2 + \beta \|g_k-f\|_{W^{1,2}}^2 = \frac{1}{2}\| T(f, x_{\hat{g}}) - \hat{y}\|_{L^2}^2 + \beta \|\hat{g}-f\|_{W^{1,2}}^2.
\end{equation}
If follows $\lim_{k \to \infty} \|g_k-f\|_{W^{1,2}}^2 = \|\hat{g}-f\|_{W^{1,2}}^2$. Hence, $(g_k)$ converges by norm to $\hat{g}$.
\end{proof}


\subsection{Theorem \ref{theo_convergencebeta}} \label{app_proof_convergencebeta}

Convergence of the ADP-$\beta$ approach.

\begin{proof} According to \eqref{eq_sourcecondition}, we can choose $d = R(\hat{x}_\alpha^\delta) - R(x^\dagger) - \langle A^*w, \hat{x}_\alpha^\delta - x^\dagger\rangle$ and there exists an operator ${\hat{B} \in L(X,Y)}$ that fulfills $\hat{x}_\alpha^\delta = x(\hat{B})$.

Because of the minimizing property of $\hat{x}_\alpha^\delta$,
\begin{equation}
\alpha R(\hat{x}_\alpha^\delta) \leqslant \frac{1}{2}\|\hat{B}\hat{x}_\alpha^\delta- y^\delta\|^2 + \alpha R(\hat{x}_\alpha^\delta) \leqslant \frac{1}{2} \|\hat{B}x^\dagger - y^\delta\|^2 + \alpha R(x^\dagger).
\end{equation}
holds. If follows
\begin{align}
\begin{split}
    d &= R(\hat{x}_\alpha^\delta) - R(x^\dagger) - \langle A^*w, \hat{x}_\alpha^\delta - x^\dagger \rangle \leqslant \frac{1}{2 \alpha} \|\hat{B}x^\dagger - y^\delta\|^2 - \langle w, A\hat{x}_\alpha^\delta - y^\dagger \rangle \\
&\leqslant \frac{1}{2\alpha}\left(\|\hat{B}x^\dagger - Ax^\dagger\| + \|y^\dagger - y^\delta\|\right)^2 + \|w\| \|A\hat{x}_\alpha^\delta - y^\dagger\| \\
&\leqslant \frac{1}{2\alpha}\left(\|x^\dagger\| \|\hat{B}- A\| + \delta\right)^2 + \|w\| \|A\hat{x}_\alpha^\delta - y^\dagger\|.
\end{split}
\end{align}

We will show $\|\hat{B}-A\| = O(\delta)$ and $\|A  \hat{x}_\alpha^\delta - y^\dagger\| = O(\delta)$ to deduce $d = O(\delta)$ for $\alpha$ chosen proportional to $\delta$.
Because of the minimizing property of $\hat{B}$, we get
\begin{equation}
    \beta \mydot \|\hat{B}-A\|^2 \leqslant \frac{1}{2} \|A  x(\hat{B}) - y^\delta\|^2 + \beta \mydot \|\hat{B}-A\|^2 \leqslant  \frac{1}{2} \|A  x(A) - y^\delta\|^2.
\end{equation}
From standard convergence results of the Tikhonov method \cite[Theorem~4.4]{Hofmann_2007} or \cite[Theorem~2]{Burger_2004}, we get $\|A  x(A) - y^\delta\| = O(\delta)$. So $\|\hat{B}-A\| = O(\delta)$ holds.

Besides,
\begin{align}
     \|A\hat{x}_\alpha^\delta - y^\dagger\| \leqslant  \|A x(\hat{B}) - y^\delta\| + \|y^\delta - y^\dagger\|
    \leqslant  \|A x(A) - y^\delta\| + \delta
\end{align}
holds and we can use $\|A  x(A) - y^\delta\| = O(\delta)$ again. So $d = O(\delta)$ follows.
\end{proof}

\bibliographystyle{abbrv}
\bibliography{literature}

\end{document}